\documentclass[12pt]{article}
\usepackage{amsmath}
\usepackage{amsfonts}
\usepackage{amssymb}
\usepackage{xcolor}
\usepackage{bigints}
\usepackage{relsize}
\newtheorem{teor}{Theorem}[section]
\newtheorem{prop}[teor]{Proposition}
\newtheorem{coro}[teor]{Corollary}
\newtheorem{lema}[teor]{Lemma}
\newtheorem{defi}{Definition}[section]
\newtheorem{ejem}{Example}[section]
\newtheorem{nota}{Remark}[section]

\allowdisplaybreaks %Para permitir que las ecuaciones sigan en p\'aginas distintas

\newenvironment{proof}[1][Proof]{\textbf{#1.} }{\ \rule{0.5em}{0.5em}}

\begin{document}
\def\N{\mathbb{N}}
\def\Z{\mathbb{Z}}
\def\K{\mathbb{K}}
\def\R{\mathbb{R}}
\def\D{\mathbb{D}}
\def\C{\mathbb{C}}
\def\T{\mathbb{T}}

\def\dint{\int}
\def\intc{\int_0^1}
\def\intD{\int_{\D}}
\def\intpi{\int_0^{2\pi}}
\def\sumi{\sum_{n=0}^\infty}
\def\dfrac{\frac}
\def\dsum{\sum}
\def\cuadro{\hfill $\Box$}
\def\qed{\hfill $\Box$}
\def\prueba{\vskip10pt\noindent{\it PROOF.}\hskip10pt}
\newcommand{\cuadrosymb}{\mbox{ }~\hfill~{\rule{2mm}{2mm}}}
%%%%%%%%%%%%%%%%%%%%%%%%%%%%%%%%%%%%%%%%%%%%%%%%%%%%%%%%%%%%%%%%
\providecommand{\norm}[1]{\lVert#1\rVert}
\providecommand{\Norm}[1]{\left\lVert#1\right\rVert}
\providecommand{\presc}[2]{\langle #1 ,#2\rangle}
\providecommand{\Presc}[2]{\left\langle #1 ,#2\right\rangle}
\providecommand{\lcn}[2]{\mathcal{L}(#1,#2)}
\providecommand{\convsot}[1]{\xrightarrow[]{SOT}#1}
\providecommand{\convwot}[1]{\xrightarrow[]{WOT}#1}
\providecommand{\normop}[1]{\lVert#1\rVert_{{op_2}}}
\providecommand{\normb}[1]{\lVert#1\rVert_{B(\ell^2)}}
\providecommand{\normm}[1]{\lVert#1\rVert_{M(\ell^2)}}
\providecommand{\normu}[1]{\lVert#1\rVert_{U(\ell^2)}}
\providecommand{\normc}[1]{\lVert#1\rVert_{C(\ell^2)}}
\providecommand{\normbv}[1]{\lVert#1\rVert_{BV(\ell^2)}}
\providecommand{\normel}[1]{\lVert#1\rVert_{E(\ell^2)}}
\providecommand{\normcr}[1]{\lVert#1\rVert_{C_r(\ell^2)}}
\providecommand{\normbh}[1]{\lVert#1\rVert_{\Bl2}}
\providecommand{\normmh}[1]{\lVert#1\rVert_{\Ml2}}
\providecommand{\normbx}[1]{\lVert#1\rVert_{B(\ell^2(X))}}
\providecommand{\normmx}[1]{\lVert#1\rVert_{M(\ell^2(X))}}
\providecommand{\normms}[1]{\lVert#1\rVert_{ms}}
\providecommand{\Norm}[1]{\left\lVert#1\right\rVert}
\providecommand{\lk}[1]{\mathcal{L}_k^{#1}}

\providecommand{\htild}[1]{\tilde{H}^2(\mathbb{T},\mathcal{L}(H))}
\providecommand{\ltwolh}[1]{\ell^2(\mathcal{L}(H))}
\providecommand{\ltwoh}[1]{\ell^2(H)}
\providecommand{\ltwosot}[1]{\ell^2_{SOT}(\mathcal{L}(H))}
\providecommand{\linftylh}[1]{\ell^\infty(\mathcal{L}(H))}

\providecommand{\bltwoh}[1]{\mathcal{B}\left(\ell^2(H)\right)}
%%%%%%%%%%%%%%%%%%%%%%%%%%%%%%%%%%%%%%%%%%%%%%%%%%%%%%%%%%%%%%%%%%%%%%%
\def\L{{\mathcal L}}
\def\P{{\cal P}}
\def\B{\mathcal B}
\def\KO{{\cal K}}
\def\J{\mathcal J}

\def\M{\mathcal M}

\def\botimes{{\bf \otimes}}
\def\ba{\begin{eqnarray*}}
\def\ea{\end{eqnarray*}}

\def\be{\begin{equation}}

\def\ee{\end{equation}}

\def\A{{\bf A}}
\def\bB{{\bf B}}
\def\bT{{\bf T}}
\def\x{{\bf x}}
\def\y{{\bf y}}
\def\z{{\bf z}}

\def\dt{\frac{dt}{2\pi}}
\def\ds{\frac{ds}{2\pi}}

\def\d{\displaystyle}

\def\Tkj{T_{kj}}
\def\Skj{S_{kj}}
\def\Rk{{\bf R}_k}
\def\Dl{{\bf D}_l}
\def\Cj{{\bf C}_j}
\def\ej{{\bf e}_j}
\def\ek{{\bf e}_k}

\def\la{\langle}
\def\ra{\rangle}
\def\e{\varepsilon}

\def\Ll2{\mathcal L^1(\ell^2(H))}
\def\Lrl2{\mathcal L^1_r(\ell^2(H))}
\def\Lll2{\mathcal L^1_l(\ell^2(H))}
\def\Cl2{\mathcal C(\ell^2(H))}
\def\Pl2{\mathcal P(\ell^2(H))}
\def\Bl2{\mathcal B(\ell^2(H))}
\def\Ml2{\mathcal M_r(\ell^2(H))}
\def\Multleft{\mathcal M_l(\ell^2(H))}
\def\Multright{\mathcal M_r(\ell^2(H))}
\def\Mult{\mathcal M(\ell^2(H))}
\def\L1l2{L^1(\ell^2(H))}
\def\l2{\ell^2(H)}

\def\sj{\sum_{j=1}^\infty}
\def\sk{\sum_{k=1}^\infty}

\def\fk{\varphi_k}
\def\fj{\varphi_j}

%%%%Spaces

\def\lsot{\ell^2_{SOT}(\N,\B(H))}
\def\lsott{\ell^2_{SOT}(\N^2,\B(H))}

\title{A class of Schur multipliers of matrices with operator entries}
\author{Oscar Blasco, Ismael Garc\'{\i}a-Bayona\thanks{%
Partially supported by  MTM2014-53009-P(MINECO Spain) and  FPU14/01032 (MECD Spain)}}
\date{}

\maketitle

\begin{abstract} In this paper, we will consider matrices with entries in the space of operators $\mathcal{B}(H)$, where $H$ is a separable Hilbert space, and consider the class of (left or right) Schur multipliers  that can be approached in the multiplier norm by matrices with a finite number of diagonals. We will concentrate on the case of Toeplitz matrices and of upper triangular matrices to get some connections with spaces of vector-valued functions.
\end{abstract}

%AMS Subj. Class: Primary 46E40, Secondary 47A56; 15B05
AMS Subj. Class: Primary 47L10; 46E40, Secondary 47A56; 15B05; 46G10.

Key words: Schur product; Toeplitz matrix; Schur multiplier; vector-valued measure; vector-valued function.

\section{Introduction. }

Recall that a bounded operator acting on the Hilbert space $\ell^2$, say $T\in\B(\ell^2)$, can be identified with a matrix $A=(\alpha_{kj})$ whose entries are given by $\alpha_{kj}=\la T(e_j),e_k\ra$ where $(e_j)$ stands for the standard orthonormal basis of $\ell^2$, and we use the notation $(Ax)_k= \sum_{j=1}^\infty \alpha_{kj}\beta_j\in \ell^2$ for any $x=(\beta_j)\in \ell^2$.
Given two matrices $A=(\alpha_{kj})$ and $B=(\beta_{kj})$ with complex entries, their Schur product  is defined by $A*B= (\alpha_{kj}\beta_{kj})$ and endows the space $\B(\ell^2)$  with a structure of Banach algebra, that is  $A*B \in \B(\ell^2)$ whenever  $A, B\in \B(\ell^2)$  (see \cite{Schur}, \cite[Proposition 2.1]{Be} or \cite[Theorem 2.20]{PP}). Moreover
\be \label{ts} \|A*B \|_{\B(\ell^2)}\le \|A \|_{\B(\ell^2)}\|B \|_{\B(\ell^2)}.\ee
Now a  matrix $A=(\alpha_{kj})$ is said to be a Schur multiplier, to be denoted by $A\in {\mathcal M}(\ell^2)$, whenever
$A* B\in \B(\ell^2)$  for any $B\in \B(\ell^2)$ and we write
$$\|A\|_{\mathcal M(\ell^2)}=\sup\{ \|A*B\|_{\B(\ell^2)}: \|B\|_{\B(\ell^2)}\le 1\}.$$
In particular, Schur's result establishes that $\B(\ell^2)\subseteq {\mathcal M}(\ell^2)$.

Operators in $\B(\ell^2)$ and multipliers in ${\mathcal M}(\ell^2)$ are well understood for Toeplitz matrices. Let us denote by $\mathcal T$ the space of matrices with constant diagonals, $A=(\alpha_{kj})$ with  $\alpha_{kj}=\gamma_{j-k}$ for a  given sequence of complex numbers $(\gamma_l)_{l\in \Z}$. The characterization of Toeplitz matrices which define bounded operators in $\B(\ell^2)$ goes back to work of Toeplitz in \cite{T}. It can be seen that $\mathcal T\cap \B(\ell^2)$ can be identified with $L^\infty(\T)$, meaning that
   a Toeplitz matrix $A=(\alpha_{kj})$ belongs to  $\B(\ell^2) $ if and only if there exists $f\in L^\infty(\T) $ such that $\alpha_{kj}=\hat f(j-k)$ for each $ k,j\in \N $. Furthermore
\be \label{tt} \|A\|_{\B(\ell^2)}=\|f\|_{L^\infty(\T)}. \ee
The space $\mathcal C(\ell^2)$ is defined in \cite{PP} as those matrices in $\B(\ell^2)$  such that $\sigma_n(A)$ (see definition below) converges to $A$ in $\B(\ell^2)$. It is shown  (see \cite[Remark 3.2]{PP}) that $\mathcal C(\ell^2)\cap \mathcal T$ can be identified with $C(\T)$.

Similarly $\mathcal T\cap \mathcal M(\ell^2)$ can be identified with the space of regular Borel measures $M(\T)$. It was G. Bennet in \cite{Be} who showed that
  a Toeplitz matrix $A=(\alpha_{kj})$ belongs to $\M(\ell^2) $ if and only if there exists $\mu\in M(\T)$ such that $\alpha_{kj}=\hat\mu(j-k) $ for $k,j\in \N$. Furthermore
  \be \|A\|_{\mathcal M(\ell^2)}= \|\mu\|_{M(\T)}.\label{tb} \ee

 The space  $\mathcal L^1(\ell^2)$ is also defined in \cite[Definition 3.7]{PP}, as those matrices in  $\mathcal M(\ell^2)$ such that $\sigma_n(A)$ converges to $A$ in  $\mathcal M(\ell^2)$. In this case  (see \cite[Remark 3.14]{PP})  $\mathcal L^1(\ell^2)\cap \mathcal T$  can be identified with  $L^1(\T)$.

The reader is also referred to  \cite{AP, Be, BG, PP} for the proofs of the above results.

In this paper we continue the study of certain operator-valued versions of Schur multipliers initiated by the authors (see \cite{BB,BB2}).  Throughout the paper  $(H, \|\cdot\|)$  stands for  a separable Hilbert space and we use the notations $\ell^2(H)$ for the space of sequences ${\bf x}=(x_n)$ with  $x_n\in H$ such that $\|{\bf x}\|_{\ell^2(H)}=(\sum_{n=1}^\infty \|x_n\|^2)^{1/2}<\infty$. In the sequel we write $ \la \cdot,\cdot\ra$ and $ \ll\cdot,\cdot \gg $ for the scalar products in  $H$  and $\ell^2(H)$ respectively, that is $\ll\x,\y\gg= \sum_{n=1}^\infty \la x_n,y_n\ra$
 and we use the notation $x\ej=(0,\cdots,0,x,0,\cdots) $ for the element in $\l2$ in which $x\in H$ is placed in the $j$-th coordinate for $j\in \N$. As usual $c_{00}(H)=span\{x\ej:x\in H, j\in \N\}$.

We denote by  $\B(H)$ the space of bounded linear operators on $H$. Basic examples are the rank one operators given for each $x,y\in H$ by   $x\otimes y(z)=\la z,x\ra y$ for $ z\in H.$
Given a matrix $\A =(\Tkj)$ with entries $\Tkj\in \B(H)$  and $\x\in c_{00}(H)$, we  write $\A \x$ for the sequence $(\sum_{j=1}^\infty\Tkj(x_j))_k$.
We say that $\A\in \B(\ell^2(H))$ if the map $\x \to \A\x$ extends to a bounded linear operator in $\ell^2(H)$, that is there exists $C>0$ such that
$$\left(\mathlarger{\mathlarger{\sum}}_{k=1}^\infty \Norm{\sum_{j=1}^\infty \Tkj(x_j)}^2\right)^{1/2}\le C \Big(\sum_{j=1}^\infty \|x_j\|^2\Big)^{1/2}.$$

We shall write
$$\|\A\|_{\Bl2}=\inf\{C\ge 0: \|\A \x \|_{\l2}\le C \|\x\|_{\l2}\}.$$
Given two matrices $\A =(\Tkj)$ and $\bB=(\Skj)$ with entries $\Tkj, \Skj\in \B(H)$ we define the Schur product $$\A*\bB= (\Tkj\Skj)$$ where $\Tkj\Skj$ stands for  composition of the operators $\Tkj$ and $\Skj$.
Contrary to the scalar-valued case, this product is not commutative.

Given a matrix $\A =(\Tkj)$, we say that $\A$ is a right Schur multiplier (respectively left Schur multiplier), to be denoted by $\A\in \mathcal M_r(\l2)$ (respectively $\A\in \mathcal M_l(\l2)$ ), whenever $\bB* \A\in \Bl2$
 (respectively $\A* \bB\in \Bl2$ ) for any $\bB\in \Bl2$.
We shall write
$$\|\A\|_{\Ml2}=\inf\{C\ge 0: \|\bB * \A  \|_{\Bl2}\le C \|\bB\|_{\Bl2}\}$$
and
$$\|\A\|_{\mathcal M_l(\l2)}=\inf\{C\ge 0: \|\A*\bB   \|_{\Bl2}\le C \|\bB\|_{\Bl2}\}.$$
We say that $\A$ is a Schur multiplier whenever $\A\in \mathcal M_l(\l2)\cap \mathcal M_r(\l2)$ and we set
$$\|\A\|_{\mathcal M(\l2)}=\max \{\|\A\|_{\mathcal M_l(\l2)},\|\A\|_{\mathcal M_r(\l2)}\}.$$

 Denoting by $\A^*$ the adjoint matrix given by  $\Skj=T^*_{jk}$ for all $k,j\in \N$,  one easily sees that $\A\in \Bl2$ (respectively $\A\in \mathcal M_l(\l2)$) if and only if $\A^*\in \Bl2$ with $\|\A\|_{\Bl2}=\|\A^*\|_{\Bl2}$ (respectively  $\A^*\in \mathcal M_r(\l2)$ and $\|\A\|_{\mathcal M_l(\l2)}= \|\A^*\|_{\mathcal M_r(\l2)}$).

It was shown in \cite[Theorem 4.7]{BB} that $\Bl2\subset \mathcal M_l(\l2)\cap \mathcal M_r(\l2)$.
Moreover
\be\label{f1}
\|\A\|_{\mathcal M(\l2)}\le \|\A\|_{\Bl2}.
\ee

We shall use the notation  $\mathcal T(H)$  for the  set of Toeplitz matrices, that is those matrices such that ${T}_{k,j}=T_{j-k}$ for $k,j\in \N$ and $T_l\in \B(H)$ for $l\in \Z$  and  $\mathcal U(H)$ for upper triangular matrices whose entries are operators.
The reader is referred to \cite{BB,BB2} for the analogues of the previous results  on $\Bl2\cap \mathcal T(H)$ and $\Ml2 \cap \mathcal T(H)$ using vector-valued measures. Here we shall consider certain subspaces of $\Bl2$ and $\Ml2$ and we shall avoid the use of vector-valued measures to make the paper self contained.

Throughout the rest of the paper, we write
 $\A=(\Tkj)$ where $\Tkj\in \B(H)$ and we denote by $\Rk$, $\Cj$ and $\Dl$  for $k,j\in \N$ and $l\in \Z$ the matrices consisting of the $k$-row, the $j$-column and $l$-diagonal respectively, that is to say
$$\Rk=(\Tkj)_{j=1}^\infty, \quad \Cj=(\Tkj)_{k=1}^\infty, \quad \Dl=({T}_{k,k+l})_{k=-\min\{l,0\}+1}^\infty.$$

 In \cite{BB2} the class  $ \Cl2$, called ``continuous matrices", with entries in the space $\B(H)$ was introduced and showed to play an important role in the study of Schur multipliers. Here we shall follow a similar approach to define the notion of ``integrable matrices", based upon the notion of ``polynomial" (see \cite[Definition 1.4]{BB2}).
 Given a matrix $\A =(\Tkj)$ with entries $\Tkj\in \B(H)$ we say that $\A$ is a ``polynomial", in short $\A\in\Pl2$, whenever there exist $N, M\in \N$ such that
$\A=\sum_{l=-N}^M\Dl$ and  \be \label{hip0} \sup_{k,j} \|\Tkj\|<\infty.\ee

Notice that  if $\A= (T_{k,j})\in \mathcal M_r(\l2)\cup \mathcal M_l(\l2)$ then
\be\label{f0}\sup_{k,j}\|\Tkj\|\le \min\{\|\A\|_{\mathcal M_r(\l2)}, \|\A\|_{\mathcal M_l(\l2)}\}.\ee

This follows easily using that for $x,y\in H$ and $T\in \B(H)$, one has
$$T (x\otimes y)= x\otimes T(y), \quad (x\otimes y)T= T^*(x)\otimes y.$$
Selecting $\x= x\ej$ and $\y=y\ek$ for some $x,y\in H$, one easily sees that (\ref{f0}) holds.
This shows that condition (\ref{hip0}) is needed for any polynomial to define a multiplier.
\begin{defi}
We define $\Lll2$ (respectively $\Lrl2$) as the closure of $\Pl2$ in $\mathcal M_l(\ell^2(H))$ (respectively $\mathcal M_r(\ell^2(H))$). We use $\Ll2=\Lll2\cap \Lrl2$.
\end{defi}

The paper is divided into two sections. In the first one we analyze the previous definition, presenting several examples in this class and getting an equivalent formulation using Schur product  with Toeplitz matrices given by summability kernels, namely it is shown that $\A\in  \Lrl2$ if and only if $P_r(\A)$ converges to $\A$ in $\Ml2$ or $\sigma_n(\A)$ converges to $\A$ in $\Ml2$ where $P_r(\A)$ and $\sigma_n(\A)$ stand for the Schur product with matrices given by the Poisson or the F\'ejer kernels (see definition below). In Section 3 we study the properties of $\A$ in terms of the properties of certain vector-valued functions related to $\A$. There are two procedures to be considered: the first one consists in  defining a matrix-valued function $f_\A(t)={\bf M}_t* \A$ where ${\bf M}_t=(e^{i(j-k)t})$ for any matrix $\A$ and the second one  in defining a Toeplitz matrix $\A_{\bf f}= (\widehat{\bf f}(j-k))$ for each operator-valued function ${\bf f}$. We show that $t\to f_{\A}(t)$ is continuous as a $\Ml2$-valued function only in the case that $\A\in \Lrl2$, and that $\A_{\bf f}\in \Lrl2$ whenever ${\bf f}\in L^1(\T, \B(H))$. Finally, also the situation of upper triangular matrices and its relationship with Hardy spaces is presented in the last subsection.

\section {Matrices in $\Ll2$}

Let us start computing the norm of $\Dl$, $\Rk$ and $\Cj$ in the space of Schur multipliers.

\begin{ejem} \label{ej2} Let  $\A =(\Tkj)$ and let $l\in \Z$ and $k,j\in \N$. Then

(i) $\Dl\in \mathcal M(\l2)$ iff $\sup_k \|T_{k,k+l}\|<\infty$ iff $\Dl\in \Bl2$. Moreover
$$\|\Dl\|_{\mathcal M(\l2)}=\|\Dl\|_{\Bl2}=\sup_{k\ge -\min\{l,0\}+1} \|T_{k,k+l}\|.$$

(ii) $\Cj\in \mathcal M_l(\l2)$ iff $\sup_k \|T_{k,j}\|<\infty.$ Moreover
$$\|\Cj\|_{\mathcal M_l(\l2)}=\sup_k \|T_{k,j}\|.$$

(iii) $\Rk\in \mathcal M_r(\l2)$ iff $\sup_j \|T_{k,j}\|<\infty.$ Moreover
$$\|\Rk\|_{\mathcal M_r(\l2)}=\sup_j \|T_{k,j}\|.$$

\end{ejem}
\begin{proof}
 (i) It is straightforward to see that $\|\Dl\|_{\Bl2}=\displaystyle\sup_{k\ge -\min\{l,0\}+1} \|T_{k,k+l}\|$.
 Notice that for $\bB=(S_{kj})$ one has that $ \Dl *\bB= \Dl'$ where $\Dl'= T_{k,k+l}S_{k,k+l}$. Hence
 $$\|\Dl *\bB\|_{\Bl2}= \sup_{k\ge -\min\{l,0\}+1} \|T_{k,k+l}S_{k,k+l}\|\le \sup_{k\ge -\min\{l,0\}+1} \|T_{k,k+l}\| \|\bB\|_{\Bl2}.$$
 Similarly for $\bB*\Dl$ and (i) holds.

 (ii) Note that $\|\Cj\|_{\Bl2}=\sup_{\|x\|=1} (\sum_{k=1}^\infty\|T_{kj}(x)\|^2)^{1/2}<\infty.$
Notice that for $\bB=(S_{kj})$ one has that $ \Cj *\bB= \Cj'$ where $\Cj'= T_{k,j}S_{k,j}$. Hence
 \ba\|\Cj *\bB\|_{\Bl2}&=& \sup_{\|x\|=1} (\sum_{k=1}^\infty\|T_{k,j}S_{k,j}(x)\|^2)^{1/2}\\
 &\le& \sup_k \|T_{k,j}\|\sup_{\|x\|=1} (\sum_{k=1}^\infty\|S_{k,j}(x)\|^2)^{1/2}\\
 &\le&\sup_k \|T_{k,j}\| \|\bB\|_{\Bl2}.\ea

 (iii) follows from (ii) by taking adjoints.
\end{proof}

\begin{ejem} Since $\Bl2 \subset \mathcal M(\l2)$ we clearly have $\Cl2\subset \Ll2$. In particular,  if $\A=\sum_l \Dl$ such that $\sum_l \|\Dl\|_{\Bl2}<\infty$ then  $\A\in\Ll2$.\end{ejem}

\begin{ejem}
Let $\x=(x_j)$ and $\y=(y_k)$ belong to $\l2$.
Then  $$(\x\otimes \y)(\z)=\ll \z,\x\gg \y, \quad \z\in \l2$$
corresponds to the matrix $\A =(x_j\otimes y_k) $ and belongs to $\Ll2$.
Moreover  $\|\x\otimes \y\|_{\Ll2}\le \|\x\|_{\l2}\|\y\|_{\l2}.$
\end{ejem}
\begin{proof} It is clear that $\x\otimes \y\in \Bl2$ and $\|\x\otimes \y\|_{\Bl2}=\|\x\|_{\l2}\|\y\|_{\l2}.$
Let $\x\otimes \y= (\Tkj)$. Note that $$\la T_{k,j}x,y\ra= \ll \x\otimes \y(x \ej),y\ek\gg= \la x,x_j\ra \la y,y_k\ra.$$ This gives that $T_{k,j}=x_j\otimes y_k$ for $k,j\in \N$.
Now taking into account that $\x_N=(x_1,\cdots,x_N,0,\cdots)$ converges to $\x$ in $\l2$ we obtain that $\x\otimes \y$ is the limit  in $\mathcal M(\l2)$ of $\x_N\otimes \y_N$ as $N\to \infty$ . Since $\x_N\otimes \y_N\in \Pl2$, one has the result.
\end{proof}

Given $\eta\in M(\T)$ we shall denote by ${\bf M}_\eta$ the Toeplitz matrix given by  $${\bf M}_\eta= \left(\hat\eta(j-k) Id\right)_{k,j}\in \mathcal T(H)$$
where $Id:H\to H$ is the identity operator.
  The cases $\eta=\delta_{-t}$  or $d\eta= f dt$ with $f\in L^1(\T)$ will be denoted by ${\bf M}_t$ and ${\bf M}_f$ respectively, that is ${\bf M}_t=(e^{i(j-k)t} Id)$ and  ${\bf M}_f=(\hat f(j-k) Id)$.

  An easy procedure to generate matrices in $\mathcal M(\l2)$ is the following one (see \cite[Proposition 3.2]{BB2}) showing that
if  $A= (a_{kj})\in \M(\ell^2)$ and $T\in \B(H)$, then
$\A=( a_{k,j}T)\in {\mathcal M}(\ell^2(H))$ and
\be \label{f1} \|\A\|_{\mathcal M(\ell^2(H))}= \|A\|_{\M(\ell^2)} \|T\|_{\B(H)}.\ee
In particular one can produce the following examples.
\begin{ejem}

(i) If $A= (a_{kj})\in \mathcal L^1(\ell^2)$ and $T\in \B(H)$ then
$\A=( a_{k,j}T)\in {\mathcal L}^1(\ell^2(H))$.

(ii) If $f\in L^1(\T)$ and  $T\in \B(H)$ then
$\A=( \hat f(j-k)T)\in {\mathcal L}^1(\ell^2(H))$.
\end{ejem}

 Recall that a family $\lbrace{k_\e\rbrace}_{\e>0}\subset L^1(\mathbb{T})$ is called a ``summability kernel'' if it satisfies

1) $\frac{1}{2\pi}\int_{-\pi}^\pi k_\e(t) dt=1$ for all $\e>0$.
\medskip

2) $\sup_{\e>0}\frac{1}{2\pi}\int_{-\pi}^\pi |k_\e(t)| dt=C<\infty$.
\medskip

3) $\forall 0<\delta<\pi$ one has $\frac{1}{2\pi}\int_{\delta\leq|t|\leq\pi} k_\e(t) dt\xrightarrow[\e\to 0]{}0$.
\bigskip

Classical examples to be used in the sequel are
the F\'ejer kernel (for $\e=\frac{1}{n}$) $$K_n(t)=\mathlarger{\mathlarger{\sum}}_{k=-n}^n\left( 1- \frac{|k|}{n+1}\right)e^{ikt}$$ and the Poisson kernel (for $\e=1-r$) $$ P_r(t)=\sum_{k\in \Z} r^{|k|}e^{ikt}.$$ We shall use the notation
$\sigma_n(\A)= {\bf M}_{K_n}* \A$ and
$ P_r(\A)=  {\bf M}_{P_r}* \A$ for $\A=(T_{k,j})$.

Observe that under the assumption (\ref{hip0}) one has $\sigma_n(\A)\in {\mathcal P}(\l2)$ and $P_r(\A)\in \mathcal{C}(\l2)$, since $\sup_l \|{\bf D}_l\|_{\Bl2}<\infty$.

It was shown in \cite[Proposition 3.4]{BB2} that given
 a matrix $\A$  with entries in $\mathcal{B}(H)$ and  a summability kernel $\lbrace{k_n\rbrace}$, if we denote $M_n(\A)= {\bf M}_{k_n}* \A$ then
 \be \label{equi00} \A\in \Bl2\;\Leftrightarrow\; \sup_n\normbh{M_n(\A)}<\infty\ee
\be \label{equi0} \A\in \Ml2\;\Leftrightarrow\; \sup_n\normmh{M_n(\A)}<\infty\ee
and similar result for left Schur multipliers.

We shall see now that the space of those matrices $\A\in \Ml2$ such that $M_n(\A)$ converges to  $\A$ in $\Ml2$ corresponds to $\Lrl2$. Next proof follows the same arguments as \cite[Theorem 4.4]{BB2} but we include it for the sake of completeness.
\begin{teor} \label{thm:caract_L1}
Let $\A$ be a matrix whose entries are in $\mathcal{B}(H)$. The following are equivalent:

1) $\A\in \Lrl2$.

2) $\lim_{n\to \infty} M_n(\A)=\A$ in $\Ml2$
where $M_n(\A)=\bold{M}_{k_n}\ast \A$ and $\lbrace{k_n\rbrace}\subseteq L^1(\T)$ is a summability kernel.

3) $\lim_{n\to \infty} \sigma_n(\A)=\A$ in $\Ml2$.

4) $\lim_{r\to 1} P_r(\A)=\A$ in $\Ml2$.

\end{teor}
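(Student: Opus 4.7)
The plan is to prove the chain $1)\Rightarrow 2)\Rightarrow 3)$ and $2)\Rightarrow 4)$, and then close the loop with $3)\Rightarrow 1)$ and $4)\Rightarrow 1)$. The implications $2)\Rightarrow 3)$ and $2)\Rightarrow 4)$ are free since the F\'ejer and Poisson kernels are concrete summability kernels, so the real content lives in the other three implications.

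For $3)\Rightarrow 1)$, observe that $\sigma_n(\A)={\bf M}_{K_n}*\A$ has only finitely many nonzero diagonals (namely $|l|\le n$), and by (\ref{f0}) the entries $T_{kj}$ of $\A\in\Ml2$ satisfy $\sup_{k,j}\|T_{kj}\|<\infty$, so $\sigma_n(\A)\in\Pl2$. Hence $\A$, being the $\Ml2$-limit of polynomials, lies in $\Lrl2$ by definition. For $4)\Rightarrow 1)$, I would first show $P_r(\A)\in\Lrl2$ for each $r<1$ by writing
\[
P_r(\A)=\sum_{l\in\Z}r^{|l|}\Dl,
\]
noting that by Example \ref{ej2}(i) we have $\|\Dl\|_{\Ml2}=\sup_k\|T_{k,k+l}\|\le\sup_{k,j}\|T_{kj}\|<\infty$, so the series converges absolutely in $\Ml2$; as the partial sums are polynomials, $P_r(\A)\in\Lrl2$. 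Since $\Lrl2$ is closed in $\Ml2$, the hypothesis $P_r(\A)\to\A$ forces $\A\in\Lrl2$.

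The heart of the argument is $1)\Rightarrow 2)$. Given $\A\in\Lrl2$ and $\varepsilon>0$, pick $\A_\varepsilon\in\Pl2$ with $\|\A-\A_\varepsilon\|_{\Ml2}<\varepsilon$, and split
\[
M_n(\A)-\A=M_n(\A-\A_\varepsilon)+\bigl(M_n(\A_\varepsilon)-\A_\varepsilon\bigr)+(\A_\varepsilon-\A).
\]
For the first and third terms I need the uniform estimate $\|M_n(\B)\|_{\Ml2}\le C\|\B\|_{\Ml2}$. This follows by combining (\ref{f1}) applied to $a_{kj}=\hat k_n(j-k)$ and $T=Id$ with Bennett's identification (\ref{tb}), giving $\|{\bf M}_{k_n}\|_{\Ml2}=\|k_n\|_{L^1(\T)}\le C$ by property 2) of a summability kernel, together with the submultiplicativity $\|{\bf M}_{k_n}*\B\|_{\Ml2}\le\|{\bf M}_{k_n}\|_{\Ml2}\|\B\|_{\Ml2}$ that comes from associativity of the Schur product. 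The middle term is a finite sum $\sum_{|l|\le N}(\hat k_n(l)-1)\Dl^\varepsilon$, and properties 1) and 3) of a summability kernel imply $\hat k_n(l)\to 1$ for every fixed $l\in\Z$ (testing the weak$^*$ convergence $k_n\,dt\to\delta_0$ against $e^{-ilt}$), so this term tends to $0$ in $\Ml2$ as $n\to\infty$. Letting $n\to\infty$ then $\varepsilon\to 0$ gives $M_n(\A)\to\A$.

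The main obstacle is the bookkeeping in $1)\Rightarrow 2)$: securing the uniform operator-valued bound $\|M_n(\cdot)\|_{\Ml2\to\Ml2}\le C$ from (\ref{f1}) and (\ref{tb}), and verifying that $\hat k_n(l)\to 1$ for each $l$ from the abstract summability-kernel axioms rather than from a concrete kernel. Once these two ingredients are in place, the remaining implications reduce to identifying $\sigma_n(\A)$ as a polynomial and $P_r(\A)$ as an absolutely convergent series of polynomials in $\Ml2$, after which the definition of $\Lrl2$ as the closure of $\Pl2$ in $\Ml2$ closes the circle.
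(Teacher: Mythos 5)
Your proposal is correct and follows essentially the same route as the paper: the three-term splitting with the uniform bound $\|{\bf M}_{k_n}\|_{\Ml2}\le \|k_n\|_{L^1(\T)}\le C$ for $1)\Rightarrow 2)$, the observation that $\sigma_n(\A)\in\Pl2$ for $3)\Rightarrow 1)$, and the absolute convergence of $\sum_l r^{|l|}{\bf D}_l$ in $\Ml2$ for $4)\Rightarrow 1)$. The only difference is that you spell out explicitly where the bound on $\|{\bf M}_{k_n}\|_{\Ml2}$ and the convergence $\hat k_n(l)\to 1$ come from, which the paper leaves implicit.
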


\begin{proof}

1)$\Rightarrow$ 2). Let $\varepsilon>0$, and select $\bold{P}=(S_{k,j})_{k,j}=\sum_{l=-N}^{N}\Dl\in\Pl2$ such that $\normmh{\A-\bold{P}}<\varepsilon/3C$ where
$C=\sup_n \|k_n\|_{L^1(\T)}\ge 1$ Then, using part (i) in Example \ref{ej2} ,
\ba
\normmh{M_n(\bold{P})-\bold{P}}&=&\Norm{\sum_{l=-N}^{N}(\hat{k_n}(l)-1)\Dl}_{\Ml2}\\
&\leq&\sup_{k,j}\norm{S_{k,j}}\cdot (2N+1)\cdot \max_{|l|\leq N}|\hat{k_n}(l)-1|\ea
 Since $\lbrace{k_n\rbrace}$ is a summability kernel, one has that $\hat k_n(l)\to 1$ as $n\to\infty$ $\forall l\in \Z$. So, we can choose $n_0\in\mathbb{N}$ such that $|\hat{k}_n(l)-1|<\frac{\varepsilon}{3(2N+1) \sup_{k,j}\norm{S_{k,j}}}$ $\forall n\geq n_0$ and $\forall |l|\leq N$. Hence, $\norm{M_n(\bold{P})-\bold{P}}_{\Ml2}<\varepsilon/3$.   Finally, for $n\ge n_0$,
 \begin{eqnarray*}
\normmh{M_n( \A)-\A}
&\leq&\normmh{\bold{M}_n\ast (\A-{\bf P})}\\
&+&\normmh{\bold{M}_n\ast \bold{P}-\bold{P}}+\normmh{\bold{P}-\A} \\
&\leq&\normmh{\bold{M}_n}\cdot\normmh{\A-\bold{P}}+\varepsilon/3+\varepsilon/3 \\
&\leq&\norm{k_n}_{L^1}\cdot \varepsilon/3C+\varepsilon/3+\varepsilon/3=\varepsilon. \\
\end{eqnarray*}

The implications 2)$\Rightarrow$ 3) \& 4) and 3)$\Rightarrow$ 1) are obvious since the F\'ejer  and Poisson kernels are summability kernels
and $\sigma_n(\A) \in \Pl2$.

 4) $\Rightarrow$ 1). Note that $P_r(\A)\in \Lrl2$ for each $0<r<1$ since the series $\sum_{l\in \Z} \Dl r^{|l|}$ is absolutely convergent in $\Lrl2$. Hence its limit also belongs to $\Lrl2$.
\end{proof}

\begin{coro}
\label{ej3} Let  $\A =(\Tkj)$ satisfying (\ref{hip0}) and  $j\in \N$. Then
 $\Cj\in \Lll2$ iff $\lim_{k\to \infty} \|T_{k,j}\|=0.$
\end{coro}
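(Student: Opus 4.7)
The plan is to reduce the statement to Theorem \ref{thm:caract_L1} in its left-sided form. Although Theorem \ref{thm:caract_L1} is stated for $\Lrl2$, the analogous characterisation for $\Lll2$ follows immediately by applying it to the adjoint: since $\|\A^*\|_{\mathcal M_r(\l2)}=\|\A\|_{\mathcal M_l(\l2)}$, taking adjoints is an isometric bijection $\Lll2\to\Lrl2$, and because $K_n$ is real and even one has $\sigma_n(\A^*)=\sigma_n(\A)^*$. Consequently $\A\in\Lll2$ iff $\sigma_n(\A)\to\A$ in $\mathcal M_l(\l2)$. Under the hypothesis (\ref{hip0}), Example \ref{ej2}(ii) already gives $\Cj\in \mathcal M_l(\l2)$, so everything reduces to deciding whether $\sigma_n(\Cj)\to \Cj$ in $\mathcal M_l(\l2)$.

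The key observation is that $\mathbf M_{K_n}$ has scalar entries $\hat K_n(j-k)\,Id$, so Schur multiplication preserves the support of $\Cj$: the matrix $\sigma_n(\Cj)$ has entries $\hat K_n(j-k)T_{k,j}$ in column $j$ and zero elsewhere, and hence $\Cj-\sigma_n(\Cj)$ is again a column matrix. Applying Example \ref{ej2}(ii) to it and recalling $\hat K_n(l)=\max(0,1-|l|/(n+1))$, one obtains the explicit formula
$$\|\Cj-\sigma_n(\Cj)\|_{\mathcal M_l(\l2)}\;=\;\sup_k|1-\hat K_n(j-k)|\,\|T_{k,j}\|\;=\;\sup_k\min\!\Big(1,\tfrac{|j-k|}{n+1}\Big)\|T_{k,j}\|.$$

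What remains is elementary. For the ``if'' direction, assume $\|T_{k,j}\|\to 0$; given $\varepsilon>0$ pick $K$ with $\|T_{k,j}\|<\varepsilon$ for $k\ge K$. The tail $k\ge K$ contributes at most $\varepsilon$ to the supremum, while the finite piece $k<K$ contributes at most $(\sup_{k,j}\|T_{k,j}\|)\cdot(j+K)/(n+1)$, which vanishes as $n\to\infty$ thanks to (\ref{hip0}). For the converse, restrict the supremum to indices $k$ with $|j-k|>n$, where the Fejér factor equals $1$; the inequality $\sup_{k>j+n}\|T_{k,j}\|\le\|\Cj-\sigma_n(\Cj)\|_{\mathcal M_l(\l2)}\to 0$ forces $\lim_k\|T_{k,j}\|=0$. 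The only point that is not purely computational is articulating the left version of Theorem \ref{thm:caract_L1}, and this is really just a remark; once it is in hand, the corollary follows from a direct inspection of the Fejér factor $1-\hat K_n$.
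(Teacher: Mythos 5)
Your proof is correct and follows essentially the same route as the paper's: compute the entries of $\Cj-\sigma_n(\Cj)$, evaluate its norm via Example \ref{ej2}(ii), and invoke the (left version of) Theorem \ref{thm:caract_L1}. You merely make explicit two points the paper leaves implicit, namely the passage to the left-sided version of the theorem via adjoints and the elementary estimate of the supremum $\sup_k\min(1,|j-k|/(n+1))\|T_{k,j}\|$.
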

\begin{proof} Notice that one has
 $$(\Cj-\sigma_n(\Cj))_{k,j}= \frac{|k-j|}{n+1} T_{k,j} \quad k\le j+n$$
$$(\Cj-\sigma_n(\Cj))_{k,j}=  T_{k,j} \quad k > j+n.$$
The result now  follows from Example \ref{ej2} and Theorem \ref{thm:caract_L1}.
\end{proof}

\bigskip

Of course, if $\A=\sum_l \Dl$ satisfying (\ref{hip0}) one has that $\Dl\in \Ll2$ for each $l\in \Z$ since $\Dl-\sigma_n(\Dl)=\frac{|l|}{n+1}\Dl$ for $n\ge |l|$.
Moreover, if $\A\in \Ml2$ then for each $l\in \Z$
\be \label{inenor} \|\Dl\|_{\Bl2}=\|\Dl\|_{\Ml2}\le \|\A\|_{\Ml2}.\ee

\begin{prop}(Riemann-Lebesgue lemma)
 If $A=\sum_l \Dl\in \Lrl2$, then
 $$\|\Dl\|_{\Bl2}\xrightarrow[|l|\to\infty]{}0.$$
\end{prop}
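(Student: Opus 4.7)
The plan is to mimic the classical Riemann--Lebesgue proof: approximate $\A$ in the multiplier norm by polynomials (which have only finitely many nonzero diagonals), and then use the fact that the $\Bl2$-norm of a single diagonal is dominated by the multiplier norm of the whole matrix.

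More precisely, given $\varepsilon>0$, I would first invoke the very definition of $\Lrl2$ as the closure of $\Pl2$ in $\Ml2$: pick $\bold{P}\in\Pl2$ with
$$\|\A-\bold{P}\|_{\Ml2}<\varepsilon.$$
Since $\bold{P}$ is a polynomial, there exist $N,M\in\N$ such that $\bold{P}=\sum_{l=-N}^{M}\bold{D}_l(\bold{P})$, where $\bold{D}_l(\bold{P})$ denotes the $l$-th diagonal of $\bold{P}$. In particular, for every $l\in\Z$ with $l>M$ or $l<-N$, the $l$-th diagonal of $\bold{P}$ is the zero matrix, so the $l$-th diagonal of $\A-\bold{P}$ coincides with $\Dl$.

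The key estimate is now inequality (\ref{inenor}) above, applied to the matrix $\A-\bold{P}\in\Ml2$: for each $l$ with $|l|>\max\{N,M\}$,
$$\|\Dl\|_{\Bl2}=\|\bold{D}_l(\A-\bold{P})\|_{\Ml2}\le\|\A-\bold{P}\|_{\Ml2}<\varepsilon.$$
Since $\varepsilon>0$ was arbitrary, this yields $\|\Dl\|_{\Bl2}\to0$ as $|l|\to\infty$.

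I do not expect any serious obstacle: everything reduces to combining the density statement defining $\Lrl2$ with the diagonal-extraction bound already established in (\ref{inenor}). The only thing one should double-check is that (\ref{inenor}) is indeed available in the right-multiplier norm (it is, since $\Ml2=\mathcal M_r(\l2)$ by the paper's notation), which allows the argument to go through verbatim.
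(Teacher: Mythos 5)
Your proof is correct and takes essentially the same approach as the paper: the paper simply makes the specific choice ${\bf P}=\sigma_{n_0}(\A)$ (available by Theorem \ref{thm:caract_L1}), whose diagonals of index $|l|>n_0$ vanish, whereas you take an arbitrary polynomial straight from the definition of $\Lrl2$. In both cases the conclusion rests on the diagonal bound (\ref{inenor}) applied to $\A-{\bf P}$.
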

\begin{proof} For each $\e>0$ select $n_0\in \N$ such that $\norm{\sigma_{n_0}(\A)-\A}_{\Ml2}<\e$. For $|l|>n_0$ we have that $\sigma_{n_0}(\Dl)=0$. Hence, using (\ref{inenor}), we conclude
\ba
\norm{\Dl}_{\Bl2}& \leq& \norm{\sigma_{n_0}(\Dl)-\Dl}_{\Ml2}\\
&\le&\norm{\sigma_{n_0}(\A)-\A}_{\Ml2}<\e.
\ea
This gives the result.
\end{proof}

Recall that  $\mathcal A(\l2)$ is the analogue to the Wiener algebra, that is matrices $\A=\sum_{l\in \Z} \Dl$ such that $\sum_{l\in \Z} \|\Dl\|_{\Bl2}<\infty$. Since $P_r(\A)\in \mathcal A(\l2)$ for any $\A\in \Ml2$ we obtain the following corollary.\begin{coro} $\mathcal A(\l2)$ and $\Cl2$ are dense in $\Ll2$.
\end{coro}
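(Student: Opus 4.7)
The plan is to verify that the Poisson means $P_r(\A)$ already give the approximating sequence: they belong to $\mathcal A(\l2)$ (and hence to $\Cl2$), and by Theorem \ref{thm:caract_L1} they converge to $\A$ in the multiplier norm.

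First I would fix $\A\in \Ll2=\Lll2\cap \Lrl2$ and write $\A=\sum_{l\in\Z}\Dl$. By (\ref{inenor}), the diagonals satisfy $\|\Dl\|_{\Bl2}\le \|\A\|_{\Ml2}$ for every $l\in\Z$ (and the analogous bound with $\|\A\|_{\mathcal M_l(\l2)}$ holds by passing to adjoints, using $\|\A\|_{\mathcal M_l(\l2)}=\|\A^*\|_{\mathcal M_r(\l2)}$ and the fact that adjoining swaps the diagonals $\Dl$ and ${\bf D}_{-l}^*$). In particular $\sup_l\|\Dl\|_{\Bl2}<\infty$, so for each $0<r<1$ the series
\[
P_r(\A)=\sum_{l\in\Z} r^{|l|}\Dl
\]
converges absolutely in $\Bl2$. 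Hence $P_r(\A)\in \mathcal A(\l2)$.

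Next I would show $\mathcal A(\l2)\subseteq \Cl2$, which is essentially immediate: if $\bB=\sum_l \Dl'\in\mathcal A(\l2)$, then $\bB-\sigma_n(\bB)=\sum_{l}\alpha_{n,l}\Dl'$ with $|\alpha_{n,l}|\le 1$ and $\alpha_{n,l}\to 0$ pointwise, so dominated convergence in the $\ell^1$-sum $\sum_l\|\Dl'\|_{\Bl2}$ yields $\sigma_n(\bB)\to \bB$ in $\Bl2$, i.e.\ $\bB\in\Cl2$. Consequently, $P_r(\A)$ lies in both $\mathcal A(\l2)$ and $\Cl2$ for every $0<r<1$.

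Finally, since $\A\in \Lrl2$, Theorem \ref{thm:caract_L1} gives $P_r(\A)\to \A$ in $\Ml2$ as $r\to 1$. The same theorem, applied to the left multiplier space (the statement and proof are symmetric, via the adjoint correspondence noted just before \eqref{f1}), gives $P_r(\A)\to\A$ in $\mathcal M_l(\l2)$ because $\A\in \Lll2$. Taking the maximum of the two norms, $P_r(\A)\to\A$ in $\mathcal M(\l2)$, which proves the density of $\mathcal A(\l2)$ and of $\Cl2$ in $\Ll2$. The only subtlety in the argument is verifying the left-sided analogue of Theorem \ref{thm:caract_L1}, but this is handled by taking adjoints as indicated above.
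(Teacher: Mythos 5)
Your proof is correct and follows essentially the same route as the paper: the paper's (one-line) argument is precisely that $P_r(\A)\in\mathcal A(\l2)$ for $\A\in\Ml2$ and that Theorem \ref{thm:caract_L1}(4) gives $P_r(\A)\to\A$ in the multiplier norm. Your additional verifications (the inclusion $\mathcal A(\l2)\subseteq\Cl2$ via dominated convergence, and the left-sided convergence via adjoints) are exactly the details the paper leaves implicit.
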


\begin{nota} $\Lrl2$ is a right ideal of $\Ml2$, that is to say if $\A\in \Lrl2$ and ${\bf B}\in \Ml2$ then
$ {\bf B}\ast {\bf A}\in \Lrl2.$
\end{nota}

\begin{defi} We write $(\Bl2,\Cl2)_l$  for the set of matrices $\A$ such that
$$ \A\ast {\bf B}\in \Cl2 \quad \forall {\bf B}\in \Bl2.$$
Similar definitions can be given for $(\Cl2,\Cl2)_l$ and for right Schur multipliers.
\end{defi}

In \cite[Theorem 4.6]{BB2} it was shown that
$ \A\in \M_l(\l2)$ (respectively $\A\in \M_r(\l2)$) if and only if $\A\in (\Cl2,\Cl2)_l$
(respectively $\A\in (\Cl2,\Cl2)_r$).

\begin{coro} $\Lrl2\subset  (\Bl2,\Cl2)_r$ and similar result for left multipliers.
\end{coro}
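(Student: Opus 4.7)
The plan is to show that for $\A\in\Lrl2$ and $\bB\in\Bl2$, the product $\bB\ast\A$ can be approached in the $\Bl2$-norm (not merely in the $\Ml2$-norm) by its Fejér sums, which is exactly the membership in $\Cl2$.

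First I would observe that $\bB\ast\A\in\Bl2$ holds automatically from the definition of right Schur multiplier, giving $\|\bB\ast\A\|_{\Bl2}\le\|\bB\|_{\Bl2}\|\A\|_{\Ml2}$. Next, using Theorem \ref{thm:caract_L1}, the hypothesis $\A\in\Lrl2$ gives $\sigma_n(\A)\to\A$ in $\Ml2$. The key algebraic identity is
\[
\sigma_n(\bB\ast\A)=\bB\ast\sigma_n(\A),
\]
which follows because $\sigma_n(\cdot)={\bf M}_{K_n}\ast(\cdot)$ and ${\bf M}_{K_n}=(\hat K_n(j-k)\,Id)_{k,j}$ has scalar-multiples-of-the-identity entries; at each coordinate $(k,j)$ the Schur product just multiplies the operator composition $S_{k,j}T_{k,j}$ by the scalar $\hat K_n(j-k)$, and scalars slide freely in and out of the composition.

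With this identity in hand, I would estimate
\[
\|\bB\ast\A-\sigma_n(\bB\ast\A)\|_{\Bl2}=\|\bB\ast(\A-\sigma_n(\A))\|_{\Bl2}\le\|\bB\|_{\Bl2}\,\|\A-\sigma_n(\A)\|_{\Ml2},
\]
where the inequality is the defining property of the right multiplier norm applied to $\A-\sigma_n(\A)\in\Ml2$. The right-hand side tends to $0$ by Theorem \ref{thm:caract_L1}, so $\sigma_n(\bB\ast\A)\to\bB\ast\A$ in $\Bl2$; thus $\bB\ast\A\in\Cl2$ and $\A\in(\Bl2,\Cl2)_r$. The left-multiplier analogue follows by the same argument (or by taking adjoints, using that ${\bf M}_{K_n}$ is self-adjoint and that the involution swaps left and right multipliers).

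There is no real obstacle here; the only point that requires care is the commutation of $\sigma_n$ with Schur product, which holds precisely because ${\bf M}_{K_n}$ is scalar-valued. If one were tempted to iterate the argument with a non-scalar Toeplitz matrix in place of ${\bf M}_{K_n}$, the non-commutativity of Schur product in the operator-valued setting (noted in the introduction) would block the identity, so I would emphasize that this is where scalarity of the summability kernel matters.
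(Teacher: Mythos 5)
Your argument is exactly the paper's: both rest on the identity $\sigma_n(\bB\ast\A)=\bB\ast\sigma_n(\A)$ (valid because ${\bf M}_{K_n}$ has scalar-multiple-of-identity entries) together with the estimate $\|\bB\ast(\A-\sigma_n(\A))\|_{\Bl2}\le\|\A-\sigma_n(\A)\|_{\Ml2}\|\bB\|_{\Bl2}$ and Theorem \ref{thm:caract_L1}. Your added remarks on why scalarity of the kernel is needed for the commutation are correct but not a different route.
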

\begin{proof}
Let us assume that $\A\in\Lrl2$ and  $\bold{B}\in \Bl2$. Since $\sigma_n( \bold{B}\ast \A)={\bf B}\ast\sigma_n(\A)$
we have
$$ \|\sigma_n( \bold{B}\ast \A)-  \bold{B}\ast \A\|_{\Bl2}\le \|(\sigma_n(\A)- \A)\|_{\Ml2}\| \bold{B}\|_{\Bl2}$$
and the result follows taking limits as $n\to \infty$.
\end{proof}

\section{Matrices versus functions}
Given a complex Banach space $X$ we  denote  by $P(\T,X)$, $C(\T,X)$ and $L^p(\T, X)$   the spaces of $X$-valued trigonometric polynomials, $X$-valued continuous functions and  $X$-valued strongly measurable functions with $\|f\|_{L^p(\T,X)}=(\int_0^{2\pi} \|f(e^{it})\|_X^p\dt)^{1/p}<\infty$ for $1\le p \le \infty$ (with the usual modification for $p=\infty$). We use the notations $\mathcal M(\T,X)$ and $M(\T,X)$ for regular $X$-valued measures and those with bounded variation respectively.  We refer to \cite{B, DFS, DU, HNVW}  for the results  on vector-valued Fourier analysis, vector measures  and projective tensor products to be used in the sequel.

To each regular vector measure $\mu\in \mathcal M(\T, \B(H))$ defined on the Borel sets of $\T$ and with values in $\B(H)$ we can associate a Toeplitz matrix
$\A_\mu=(T_{k,j})$ given by
\be
T_{k,j}=\hat\mu(j-k), \quad k,j\in \N
\ee
where $\hat\mu(l)= \int_0^{2\pi} e^{-il t}d\mu(t)\in \B(H)$ for $l\in \Z$.

If $d\mu ={\bf g}dm$ for a given function ${\bf g}:\T \to \B(H)$ we simply denote it by $\A_{\bf g}$. In particular if ${\bf g}\in P(\T,\B(H))$ then $\A_{\bf g}\in \Pl2$.

 The following  operator-valued function was introduced in \cite{BB2} for  each matrix  $\A=(\Tkj)$: $$f_\A(t)=(e^{i(j-k)t}\Tkj), \quad t\in [-\pi,\pi).$$
Clearly if  $\A\in \Pl2$ one has
$$f_\A(t)=\sum_{l\in \Z} \Dl e^{ilt}\in  P(\T, \Bl2).$$
Taking into account that $f_\A(t)={\bf M}_t*\A$ and that ${\bf M}_t\in \mathcal M(\l2)\cap \mathcal T(H)$ with $\|{\bf M}_t\|_{\mathcal M(\l2)}=1$, then  $ f_\A(t)$ takes values into $\Bl2$, $\Ml2$ or $\Lrl2$ whenever $\A$ belongs to $\Bl2$, $\Ml2$ or $\Lrl2$ respectively and with the same norm for each $t\in [0,2\pi)$.
In particular, for $\A\in \Ml2$ one has
\be \label{equi}
\sup_{t\in [0,2\pi)} \|f_\A(t)\|_{\Ml2}= \|\A\|_{\Ml2}.
\ee
It  was shown that in general if $\A\in \Bl2$, then $t\to f_{\A}(t)$ was not strongly measurable as a $\Bl2$-valued mapping  (see \cite[Proposition 4.2]{BB2}). The fact that the function $t\to f_{\A}(t)$ is continuous as a $\Bl2$-valued function is actually equivalent to $\A\in \Cl2$ (see \cite[Proposition 4.4]{BB2}).
We shall analyze now the properties of such a function as a multiplier-valued function.

\begin{prop} (i) There exists $\A\in \mathcal M(\l2)$ such that $f_\A$ is not strongly measurable as a $\mathcal M(\l2)$-valued function.

(ii) There exist $\A\in \mathcal M_l(\l2)$  and ${\bf B}\in \Bl2$ such that the map $t\to f_\A(t)*{\bf B}$ is not strongly measurable as a $\Bl2$-valued function.
\end{prop}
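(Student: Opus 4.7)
My plan is to rely on Pettis' measurability theorem: any strongly measurable function into a Banach space is essentially separably valued. For (i), I will exhibit a single $\A\in\mathcal M(\l2)$ whose orbit $\{f_\A(t):t\in[0,2\pi)\}$ is uncountable and uniformly separated in $\Ml2$, hence non-separable. The natural candidate is the Schur multiplicative identity $\A=(Id)_{k,j}$, which satisfies $\A\ast{\bf B}={\bf B}\ast\A={\bf B}$ for every ${\bf B}\in\Bl2$, so $\A\in\mathcal M(\l2)$ with norm $1$. For this choice $f_\A(t)={\bf M}_t$.

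The key computation is the explicit lower bound on $\|{\bf M}_t-{\bf M}_s\|_{\Ml2}$. Writing ${\bf M}_t-{\bf M}_s=(a_{k,j}\,Id)$ with scalar Toeplitz coefficients $a_{k,j}=e^{i(j-k)t}-e^{i(j-k)s}$, I recognise $(a_{k,j})$ as the Toeplitz matrix whose symbol is the measure $\delta_{-t}-\delta_{-s}\in M(\T)$. Combining Bennett's Toeplitz identification \eqref{tb} with the tensorization identity $\|(a_{k,j}T)\|_{\mathcal M(\ell^2(H))}=\|(a_{k,j})\|_{\M(\ell^2)}\|T\|_{\B(H)}$ (taking $T=Id$) yields
\[
\|{\bf M}_t-{\bf M}_s\|_{\Ml2}=\|\delta_{-t}-\delta_{-s}\|_{M(\T)}=2
\]
whenever $t\not\equiv s\pmod{2\pi}$. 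Thus the range of $f_\A$ is a $2$-discrete subset of $\Ml2$ of cardinality of the continuum, so $f_\A$ cannot be essentially separably valued on any set of positive Lebesgue measure, and Pettis' theorem rules out strong measurability.

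For (ii) I will keep $\A=(Id)_{k,j}\in\mathcal M_l(\l2)$. For an arbitrary ${\bf B}=(B_{k,j})\in\Bl2$, the matrix $f_\A(t)\ast{\bf B}={\bf M}_t\ast{\bf B}$ has $(k,j)$-entry $e^{i(j-k)t}B_{k,j}$, so the map $t\mapsto f_\A(t)\ast{\bf B}$ coincides pointwise with $t\mapsto f_{\bf B}(t)$. Invoking \cite[Proposition 4.2]{BB2}, which produces some ${\bf B}\in\Bl2$ for which $t\mapsto f_{\bf B}(t)$ fails to be strongly measurable as a $\Bl2$-valued function, finishes the argument.

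I do not foresee a significant obstacle: both parts reduce to Pettis' theorem plus either an explicit norm computation (for (i)) or a direct reduction to the cited scalar-of-the-same-type result of \cite{BB2} (for (ii)). The mildly subtle point is that in (i) I need non-separability in the multiplier norm $\Ml2$, not just in $\Bl2$; this is exactly why the argument goes through the measure norm via \eqref{tb} and the tensorization formula, rather than through the weaker $L^\infty$ lower bound one would get from \eqref{tt}.
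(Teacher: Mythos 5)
Your proposal is correct and follows essentially the same route as the paper: both parts take $\A={\bf 1}$, use the tensorization identity together with Bennett's Toeplitz characterization to get $\|{\bf M}_t-{\bf M}_s\|_{\Ml2}=\|\delta_{-t}-\delta_{-s}\|_{M(\T)}=2$ for (i), and reduce (ii) to the non-measurable example of \cite[Proposition 4.2]{BB2} via $f_{\bf 1}\ast{\bf B}=f_{\bf B}$. The only cosmetic difference is that the paper writes down the specific ${\bf B}$ (namely $T_{k,2k}=Id$, zero elsewhere) rather than quoting it abstractly.
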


\begin{proof}
(i) It suffices to select $\A={\bf 1}$ where we use $\bf 1$ for the unit element in $\mathcal M(\l2)$ given by ${\bf 1}_{k,j}= Id$ for the identity operator $Id:H\to H$.
Clearly $f_{\bf 1}(t)={\bf M}_t=(e^{i(j-k)t}Id)$ is not $\mathcal M(\l2)$-valued strongly measurable. Indeed, from (\ref{f1}), one has that
$$\|f_{\bf 1}(t)- f_{\bf 1}(s)\|_{\mathcal M(\l2)}=\|\delta_{-t}-\delta_{-s}\|_{M(\T)}=2, \quad t\ne s$$
and therefore the range of $f_{\bf 1}$ is not separable.

(ii) Select $\A={\bf 1}$ and ${\bf B}=(\Tkj)$ where $\Tkj=0$ for each $j\ne 2k$ and $T_{k,2k}=Id$ for $k\in \N$, and note the fact that $f_{\bf 1}* {\bf B}= f_{\bf B}$ which according to \cite[Proposition 4.2]{BB2}  is not strongly measurable with values in $\Bl2$.
\end{proof}

\begin{prop}  Let $\A=(\Tkj)\in \M_l(\l2)$ and ${\bf B}=(\Skj)\in \Bl2$ .

If either $\A\in (\Bl2, \Cl2)_l$ or ${\bf B}\in \Cl2$ then $t\to f_{\bf A}(t)* {\bf B}$  is continuous with values in $\Bl2$.

In particular   if $\x,\y\in \l2$ then the map
$$f_\A(t)\ast(\x\otimes \y)=\Big(e^{i(j-k)t} x_j\otimes \Tkj(y_k)\Big)_{k,j}$$
is continuous from $\T$ into $\Bl2$.
\end{prop}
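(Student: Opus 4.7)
The plan is to reduce both claims to the characterization (from \cite[Proposition 4.4]{BB2} recalled above) that a matrix ${\bf C}$ belongs to $\Cl2$ if and only if $t\mapsto f_{{\bf C}}(t)$ is continuous from $\T$ into $\Bl2$. The pivotal identity to record first is
$$f_\A(t)\ast \bB \;=\; f_{\A\ast \bB}(t),$$
which is immediate because every entry of $\mathbf{M}_t=(e^{i(j-k)t}Id)$ is a scalar multiple of $Id$, and scalars commute through the operator compositions $\Tkj\circ \Skj$. Hence the continuity statement is equivalent to showing $\A\ast \bB\in\Cl2$ under either hypothesis.

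If $\A\in(\Bl2,\Cl2)_l$, this is immediate from the definition of that class applied to $\bB\in\Bl2$. If instead $\bB\in\Cl2$, the same scalar-commutativity applied to $\mathbf{M}_{K_n}$ gives $\sigma_n(\A\ast \bB)=\A\ast\sigma_n(\bB)$, and then the left multiplier property of $\A$ together with $\sigma_n(\bB)\to \bB$ in $\Bl2$ yields
$$\|\sigma_n(\A\ast \bB)-\A\ast \bB\|_{\Bl2}\le \|\A\|_{\mathcal M_l(\l2)}\,\|\sigma_n(\bB)-\bB\|_{\Bl2}\to 0$$
as $n\to\infty$, so $\A\ast \bB\in\Cl2$.

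For the ``in particular'' statement I would check that $\x\otimes\y\in\Cl2$ and then invoke the second case. The truncations $\x_N\otimes \y_N$ have only finitely many nonzero entries and so lie in $\Pl2$, and they converge to $\x\otimes\y$ in $\Bl2$; a standard three-$\e$ argument using $\|\mathbf{M}_{K_n}\|_{\mathcal M(\l2)}\le 1$ together with $\sigma_n({\bf P})\to{\bf P}$ in $\Bl2$ for polynomials ${\bf P}$ then gives $\sigma_n(\x\otimes\y)\to\x\otimes\y$ in $\Bl2$, i.e., $\x\otimes\y\in\Cl2$. The explicit form of the entries is then a direct computation from $T(x\otimes y)=x\otimes T(y)$ already recorded. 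I foresee no real obstacle; the whole argument is a matter of pulling scalar identities out of operator products, and all the analytic content is packaged into the two cited facts.
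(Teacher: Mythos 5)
Your proof is correct and follows the same route as the paper: the identity $f_{\A}(t)\ast{\bf B}=f_{\A\ast{\bf B}}(t)$ reduces everything to showing $\A\ast{\bf B}\in\Cl2$, which is then fed into the cited characterization of $\Cl2$ by continuity of $t\mapsto f_{{\bf C}}(t)$. The paper states this in one line; you have merely supplied the (correct) verifications it leaves implicit, namely that $\A\ast{\bf B}\in\Cl2$ in the case ${\bf B}\in\Cl2$ via $\sigma_n(\A\ast{\bf B})=\A\ast\sigma_n({\bf B})$ and the left-multiplier bound, and that $\x\otimes\y\in\Cl2$ by approximating with the truncations $\x_N\otimes\y_N\in\Pl2$.
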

\begin{proof}
 Both cases follow invoking  \cite[Proposition 4.4]{BB2} since $f_{\bf A}(t)* {\bf B}= f_{\A*{\bf B}}(t)$ and $\A*{\bf B}\in \Cl2$ in each situation.
\end{proof}

Let us now give another characterization of matrices in $\Lll2$.

\begin{teor} \label{thm:caract_cont}
Let $\A$ be a matrix whose entries are in $\mathcal{B}(H)$. Then
 $\A\in \Lll2$ iff
 $t\to f_\A(t)$ is a $\mathcal M_l(\l2)$-valued continuous function.
\end{teor}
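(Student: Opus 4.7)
Plan. The argument rests on the fact that ${\bf M}_t$ is an isometry on $\mathcal M_l(\l2)$ under Schur multiplication; that is,
\[
\|{\bf M}_t \ast \bB\|_{\mathcal M_l(\l2)} = \|\bB\|_{\mathcal M_l(\l2)} \qquad (t \in \T,\ \bB \text{ a matrix}),
\]
which is the left analogue of (\ref{equi}). This follows from the identity ${\bf M}_t \ast \bB = D_{-t}\bB D_t$, where $D_t$ is the unitary diagonal matrix $\mathrm{diag}(e^{ikt}Id)_k$, so that conjugation by $D_t$ is a bijective isometry on each of $\Bl2$ and $\mathcal M_l(\l2)$.

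Forward direction. Given $\A \in \Lll2$, pick $\A_n \in \Pl2$ with $\A_n \to \A$ in $\mathcal M_l(\l2)$. Each $f_{\A_n}(t) = \sum_{l=-N_n}^{N_n} e^{ilt} {\bf D}_l^{(n)}$ is a trigonometric polynomial with coefficients in $\mathcal M_l(\l2)$, hence continuous as a $\mathcal M_l(\l2)$-valued function. Since $f_\A(t) - f_{\A_n}(t) = {\bf M}_t \ast (\A - \A_n)$, the isometry gives
\[
\sup_{t \in \T}\|f_\A(t) - f_{\A_n}(t)\|_{\mathcal M_l(\l2)} = \|\A - \A_n\|_{\mathcal M_l(\l2)} \xrightarrow[n\to\infty]{} 0,
\]
so $f_\A$ is a uniform limit of continuous functions and therefore continuous.

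Reverse direction. Assume $t \mapsto f_\A(t)$ is continuous into $\mathcal M_l(\l2)$. Then $\A = f_\A(0) \in \mathcal M_l(\l2)$. Since $f_\A$ is continuous on a compact set into a Banach space, the Bochner integral
\[
I_n := \frac{1}{2\pi}\int_0^{2\pi} K_n(s) f_\A(s)\, ds
\]
exists in $\mathcal M_l(\l2)$. Because (\ref{f0}) makes entry extraction $\bB \mapsto B_{k,j}$ a bounded linear map from $\mathcal M_l(\l2)$ into $\B(H)$, and bounded linear maps commute with Bochner integration, $I_n$ coincides entry-wise with the explicit matrix $\sigma_n(\A)$. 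Using $\int K_n = 2\pi$, the Bochner triangle inequality yields
\[
\|\sigma_n(\A) - \A\|_{\mathcal M_l(\l2)} \le \frac{1}{2\pi}\int_0^{2\pi} K_n(s)\|f_\A(s) - \A\|_{\mathcal M_l(\l2)}\, ds,
\]
and splitting the integral at $|s| < \delta$ vs.\ $|s| \ge \delta$, invoking continuity of $f_\A$ at $0$ and property 3) of the F\'ejer kernel, one concludes $\sigma_n(\A) \to \A$ in $\mathcal M_l(\l2)$. Since each $\sigma_n(\A) \in \Pl2$, this places $\A$ in the closure $\Lll2$.

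Main obstacle. The delicate point is identifying the abstract Bochner integral $I_n$ with the explicit matrix $\sigma_n(\A)$; this is what forces one to use (\ref{f0}) to see point-evaluation as a continuous linear functional on $\mathcal M_l(\l2)$. Once this is settled, the reverse direction parallels the summability-kernel half of Theorem \ref{thm:caract_L1}, with $f_\A$ playing the role of a continuous trajectory in $\mathcal M_l(\l2)$, and the forward direction is essentially immediate from the isometry lemma.
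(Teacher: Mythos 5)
Your proof is correct and follows essentially the same route as the paper: both arguments rest on the isometry $\|f_\A(t)\|_{\mathcal M_l(\l2)}=\|\A\|_{\mathcal M_l(\l2)}$ together with Fej\'er summability, the paper simply citing the vector-valued Fej\'er theorem ($f\in C(\T,X)$ iff $\sigma_n(f)\to f$ uniformly) as well known where you re-derive its nontrivial half via the Bochner integral $I_n$ and the entrywise identification $I_n=\sigma_n(\A)$. Your forward direction uses arbitrary polynomial approximants where the paper uses $\sigma_n(\A)$; this changes nothing of substance.
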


\begin{proof}
Note that $\|\A\|_{\mathcal M_l(\l2)}= \|f_{\A}(t)\|_{\mathcal M_l(\l2)}$ for any $t\in \T$. On the other hand, $\sigma_n(f_\A(t))=f_{\sigma_n(\A)}(t) \in P(\T, \Bl2)$ and
\ba
\sup_t\|f_{\sigma_n(\A)}(t)-f_\A(t)\|_{\mathcal M_l(\l2)}&=&\sup_t\|f_{\sigma_n(\A)-\A}(t)\|_{\mathcal M_l(\l2)}\\
&=&\|\sigma_n(\A)-\A\|\|_{\mathcal M_l(\l2)}.\ea
It is well known that $f_\A\in C( \T, {\mathcal M_l(\l2)})$ iff $\sigma_n(f_{\A})$ converges to  $f_\A$ in $C( \T, {\mathcal M_l(\l2)})$, which shows the equivalence between both conditions.
\end{proof}

\subsection{The Toeplitz case}

It was observed  previously that  $\A_f=(\hat f(j-k)T)\in \Ll2\cap \mathcal T(H)$ whenever $f\in L^1(\mathbb{T})$ and $T\in \mathcal{B}(H)$. We shall first show that this is actually true for operator-valued integrable functions. This result might be obtained from the
inclusion $M(\T, \Bl2)\subset \Ml2 \cap\mathcal T(H)$, but we shall give a direct proof in this case.

\begin{prop}
\label{lema:norma}
Let ${\bf f}\in L^1(\mathbb{T},\mathcal{B}(H))$, and consider $\A_{\bf f}=(\widehat{{\bf f}}(j-k))=(T_{j-k})$. Then $\A_{\bf f}\in \Ll2$ with
$$\|\A_{\bf f}\|_{\mathcal M(\l2)}\le \norm{{\bf f}}_{L^1(\T,\mathcal{B}(H))}$$
\end{prop}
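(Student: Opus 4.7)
My plan is to first establish the norm bound when ${\bf f}$ is a trigonometric polynomial in $P(\T,\B(H))$, and then pass to general ${\bf f}\in L^1(\T,\B(H))$ by density. Linearity of the assignment ${\bf f}\mapsto\A_{\bf f}$ together with the fact that the F\'ejer means $\sigma_n({\bf f})=K_n*{\bf f}$ are operator-valued trigonometric polynomials converging to ${\bf f}$ in $L^1(\T,\B(H))$ will handle the extension: once $\|\A_{\bf g}\|_{\Mult}\le\|{\bf g}\|_{L^1}$ is known for polynomial ${\bf g}$, the sequence $\A_{\sigma_n({\bf f})}\in\Pl2$ becomes Cauchy in $\Mult$, its limit lies in $\Ll2$ by the very definition of that class, and the limit must coincide with $\A_{\bf f}$ entrywise because $\|\widehat{\sigma_n({\bf f})}(l)-\widehat{{\bf f}}(l)\|_{\B(H)}\le\|\sigma_n({\bf f})-{\bf f}\|_{L^1}\to 0$ for every $l\in\Z$.

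For the polynomial case, fix ${\bf f}(t)=\sum_{|l|\le N}T_l e^{ilt}$, a matrix $\bB=(S_{k,j})\in\Bl2$, and finitely supported $\x=(x_j),\y=(y_k)\in c_{00}(H)$. Replacing the Fourier coefficient by its integral representation and commuting the (finite) sum in $(k,j)$ with the integral gives
\ba
\ll(\bB*\A_{\bf f})\x,\y\gg
&=&\sum_{k,j}\la S_{k,j}\widehat{{\bf f}}(j-k)x_j,y_k\ra\\
&=&\frac{1}{2\pi}\int_0^{2\pi}\sum_{k,j}\la S_{k,j}[e^{-ijt}{\bf f}(t)x_j],e^{-ikt}y_k\ra\,dt.
\ea
With ${\bf u}(t)=(e^{-ijt}{\bf f}(t)x_j)_j$ and ${\bf v}(t)=(e^{-ikt}y_k)_k$, the inner sum is exactly $\ll\bB{\bf u}(t),{\bf v}(t)\gg$, and $\|{\bf u}(t)\|_{\l2}\le\|{\bf f}(t)\|_{\B(H)}\|\x\|_{\l2}$, $\|{\bf v}(t)\|_{\l2}=\|\y\|_{\l2}$. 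Hence
$$|\ll(\bB*\A_{\bf f})\x,\y\gg|\le\|\bB\|_{\Bl2}\|\x\|_{\l2}\|\y\|_{\l2}\|{\bf f}\|_{L^1(\T,\B(H))},$$
which by density of $c_{00}(H)$ yields $\|\A_{\bf f}\|_{\Ml2}\le\|{\bf f}\|_{L^1}$. The left-multiplier bound $\|\A_{\bf f}\|_{\Multleft}\le\|{\bf f}\|_{L^1}$ follows the same way, using $\widehat{{\bf f}}(j-k)^*=\frac{1}{2\pi}\int_0^{2\pi}{\bf f}(t)^*e^{i(j-k)t}\,dt$ to absorb ${\bf f}(t)^*$ into ${\bf v}(t)$ instead of into ${\bf u}(t)$.

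With the polynomial estimate secured, applying it to ${\bf f}-\sigma_n({\bf f})$ gives $\|\A_{\bf f}-\A_{\sigma_n({\bf f})}\|_{\Mult}\le\|{\bf f}-\sigma_n({\bf f})\|_{L^1}\to 0$, placing $\A_{\bf f}$ in $\Ll2$ and passing the inequality to the limit (using also $\|\sigma_n({\bf f})\|_{L^1}\le\|{\bf f}\|_{L^1}$, since $K_n\ge0$ has integral one). The only mild point of care will be the Fubini-type interchange of sum and integral, which is harmless here because restricting to $\x,\y\in c_{00}(H)$ and polynomial ${\bf f}$ renders the double sum finite; since the bound is derived scalarly through the pairings $\ll\cdot,\cdot\gg$, no separate Bochner-integrability argument for $\Bl2$-valued integrands is required.
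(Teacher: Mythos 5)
Your proof is correct, and it shares the paper's overall skeleton (prove the bound for operator-valued trigonometric polynomials, then extend to all of $L^1(\T,\B(H))$ by density), but the key estimate is carried out by a more elementary route. The paper treats the left-multiplier product $\A_{\bf f}\ast {\bf B}$, where $\hat{\bf f}(j-k)$ acts \emph{after} $S_{k,j}$, and therefore has to pull ${\bf f}(t)$ out of the pairing via the duality $\B(H)=(H\hat\otimes H)^*$: it reindexes the double sum along diagonals and bounds the projective tensor norm of $\sum_k\big(\sum_j S_{k,j}x_je^{-ijt}\big)\otimes y_ke^{ikt}$, recovering the right-multiplier bound afterwards by taking adjoints. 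You instead attack the right-multiplier product ${\bf B}\ast\A_{\bf f}$ directly: there $\hat{\bf f}(j-k)$ acts \emph{before} $S_{k,j}$, so ${\bf f}(t)$ can be absorbed into the modulated vector ${\bf u}(t)=(e^{-ijt}{\bf f}(t)x_j)_j$ and everything reduces to Cauchy--Schwarz in $\l2$ plus $\|{\bf B}{\bf u}(t)\|_{\l2}\le\|{\bf B}\|_{\Bl2}\|{\bf f}(t)\|_{\B(H)}\|\x\|_{\l2}$; the left case is handled symmetrically by moving ${\bf f}(t)^*$ onto $\y$. This dispenses with the tensor-product machinery altogether (the modulation $\x\mapsto(e^{-ijt}x_j)_j$ is in fact also the engine of the paper's estimate, hidden inside its projective-norm step), at no loss of generality. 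Two minor points, neither a gap: your direct estimate nowhere uses that ${\bf f}$ is a polynomial (with $\x,\y\in c_{00}(H)$ the double sum is finite for any ${\bf f}\in L^1$), so ``applying it to ${\bf f}-\sigma_n({\bf f})$'' is legitimate even though that function is not a polynomial --- otherwise one falls back on the Cauchy-sequence argument of your plan, where the entrywise identification of the limit uses $\sup_{k,j}\|T_{k,j}\|\le\|\A\|_{\Ml2}$; and since $\Ll2=\Lll2\cap\Lrl2$, one needs polynomial approximation in both multiplier norms, which your symmetric treatment of the left case indeed provides.
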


\begin{proof}
  Recall that $\B(H)=(H\hat\otimes H)^*$ by means of the formula
  $T(x\otimes y)=\la Tx,y\ra$.
Assume first that ${\bf f}\in  P(\T, \mathcal{B}(H))$ and let $\x=(x_j),\y=(y_k)\in \ell^2(H)$  and ${\bf B}=(S_{k,j})\in \Bl2$. We can write
\begin{align*}
|\ll (\A_{\bf f}*&{\bf B}) \x,\y\gg|=\left|\sum_{k,j}\presc{T_{k,j}S_{k,j} x_j}{y_k}\right|=\left|\sum_{k,l}\presc{T_l S_{k,k+l}x_{l+k}}{y_k}\right|\\
&=\left|\sum_l\ T_l\Big(\sum_k S_{k,k+l}x_{l+k}\otimes y_k\Big)\right|\\
&=\left|\bigintss_0^{2\pi}\left(\sum_l\hat {\bf f}(l) e^{ilt}\right)\left(\mathlarger{\mathlarger{\sum}}_l \left(\sum_k S_{k,k+l} x_{l+k}\otimes y_k\right) e^{-ilt}\right)\frac{dt}{2\pi}\right|\\
&=\left|\bigintss_0^{2\pi}{\bf f}(t)\left( \sum_k\left(\sum_j  S_{k,j}x_{j}e^{-ijt}\right)\otimes y_k e^{ikt}\right)\frac{dt}{2\pi}\right|\\
&\leq\bigintss_0^{2\pi} \norm{{\bf f}(t)}_{\mathcal{B}(H)}\Norm{\sum_k\left(\sum_j  S_{k,j}x_{j}e^{-ijt}\right)\otimes y_k e^{ikt}}_{H\hat\otimes H}\frac{dt}{2\pi}\\
&\leq\bigintss_0^{2\pi} \norm{{\bf f}(t)}_{\mathcal{B}(H)}\sum_k\|\sum_j  S_{k,j}x_{j}e^{-ijt}\|_H\|y_k \|_H\frac{dt}{2\pi}\\
&= \norm{{\bf f}}_{L^1(\T,\mathcal{B}(H))} \sup_{t\in [0,2\pi)} \left(\sum_k\|\sum_j  S_{k,j}x_{j}e^{-ijt}\|^2_H\right)^{1/2}\norm{\y}_{\ell^2(H)}\\
&= \norm{{\bf f}}_{L^1(\T,\mathcal{B}(H))}\|{\bf B}\|_{\Bl2}\norm{\x}_{\ell^2(H)}\norm{\y}_{\ell^2(H)}.
\end{align*}

Therefore $\|\A_{\bf f}\|_{{\mathcal M}_l(\l2)}\leq\norm{{\bf f}}_{L^1(\T,\mathcal{B}(H))}$.
To get $\|\A_{\bf f}\|_{{\mathcal M}_r(\l2)}\leq\norm{{\bf f}}_{L^1(\T,\mathcal{B}(H))}$, just notice that
for ${\bf f}^*(t)=({\bf f}(t))^*$ one has ${\bf f}^*\in L^1(\T, \mathcal B(H))$ with $\norm{{\bf f^*}}_{L^1(\T,\mathcal{B}(H))}=\norm{{\bf f}}_{L^1(\T,\mathcal{B}(H))}$  that $\widehat{{\bf f}^*}(l)= (\hat {\bf f}(-l))^*$ for all $l\in \Z$. Since $\A_{\bf f}^*= \A_{\bf f^*}$ we get the other estimate.

To obtain the general case ${\bf f}\in L^1(\T, \Bl2)$ we can use an approximation argument since polynomials are dense in $L^1(\T, \Bl2)$.
\end{proof}

Recall that for $\Cl2$ we have the following fact.
\begin{lema}
\label{cinfty} (see \cite[Lemma 4.7]{BB2})
If ${\bf f}\in C(\mathbb{T},\mathcal{B}(H))$ then $\A_{\bf f}\in \Cl2$. Moreover
$$\normbh{\A_{\bf f}}= \norm{{\bf f}}_{C(\T,\mathcal{B}(H))}.$$

\end{lema}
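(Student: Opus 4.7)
The plan is to combine two ingredients: the operator-valued Fejér theorem for continuous symbols, and the identity $\|\A_{\bf g}\|_{\Bl2}=\|{\bf g}\|_{C(\T,\B(H))}$ that promotes uniform convergence of symbols into $\Bl2$-convergence of the associated Toeplitz matrices. This last identity is the Toeplitz correspondence in the operator-valued setting restricted to continuous symbols, and it would be the engine of the argument.

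First I would verify the matrix identity $\sigma_n(\A_{\bf f})=\A_{\sigma_n({\bf f})}$. By definition, $\sigma_n(\A_{\bf f})={\bf M}_{K_n}*\A_{\bf f}$ has $(k,j)$-entry $\hat K_n(j-k)\,\hat{\bf f}(j-k)$, and this coincides with $\widehat{K_n*{\bf f}}(j-k)=\widehat{\sigma_n({\bf f})}(j-k)$. In particular $\sigma_n(\A_{\bf f})\in \Pl2$ since the Fejér mean $\sigma_n({\bf f})$ is a trigonometric polynomial in $\B(H)$, so its Toeplitz matrix has only finitely many nonzero diagonals and bounded entries.

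Next I would establish $\|\A_{\bf g}\|_{\Bl2}=\|{\bf g}\|_{C(\T,\B(H))}$ for any ${\bf g}\in C(\T,\B(H))$. For the upper bound, I identify a sequence $\x=(x_j)\in c_{00}(H)$ with the trigonometric polynomial $h_\x(t)=\sum_j x_j e^{-ijt}\in L^2(\T,H)$; the Parseval identity gives $\|h_\x\|_{L^2(\T,H)}=\|\x\|_{\l2}$, and a direct computation shows that the $k$-th Fourier coefficient of ${\bf g}(t)h_\x(t)$ is exactly $(\A_{\bf g}\x)_k=\sum_j \hat{\bf g}(j-k)x_j$. Hence the action of $\A_{\bf g}$ is the compression to the analytic side of multiplication by ${\bf g}$ on $L^2(\T,H)$, which is bounded by $\sup_t\|{\bf g}(t)\|_{\B(H)}$. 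For the reverse inequality I would pick $t_0\in\T$ and $x,y\in H$ of norm one, and use trigonometric-polynomial sequences $\x_N,\y_N$ obtained from a Fejér-type concentration at $e^{it_0}$, so that $\ll \A_{\bf g}\x_N,\y_N\gg \to \la {\bf g}(t_0)x,y\ra$ while $\|\x_N\|_{\l2},\|\y_N\|_{\l2}\to 1$; continuity of ${\bf g}$ ensures that this localization really reaches the pointwise norm.

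Finally I would combine the two facts. By Fejér's theorem for continuous $\B(H)$-valued functions on $\T$ (which holds verbatim as in the scalar case because the Fejér kernel is a scalar summability kernel), $\sigma_n({\bf f})\to {\bf f}$ uniformly in $C(\T,\B(H))$. Applying the norm identity to the continuous symbol ${\bf f}-\sigma_n({\bf f})$ yields
$$\|\A_{\bf f}-\sigma_n(\A_{\bf f})\|_{\Bl2}=\|\A_{{\bf f}-\sigma_n({\bf f})}\|_{\Bl2}=\|{\bf f}-\sigma_n({\bf f})\|_{C(\T,\B(H))}\to 0,$$
so $\A_{\bf f}\in \Cl2$, and taking ${\bf g}={\bf f}$ in the identity supplies the stated norm equality. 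The main obstacle is the lower bound in the norm identity: arranging test vectors in $\l2$ whose ``symbols'' concentrate at a prescribed point of $\T$ while remaining normalized requires a careful choice, and this is where the proof of the Toeplitz correspondence for continuous operator-valued symbols really lives.
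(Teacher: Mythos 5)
Your argument is correct. Note that the paper itself gives no proof of this lemma --- it simply quotes \cite[Lemma 4.7]{BB2} --- so there is no internal argument to measure you against; but the route you take (realizing $\A_{\bf g}$ as the compression to nonnegative indices of the multiplication operator by ${\bf g}$ on $L^2(\T,H)$ to get $\normbh{\A_{\bf g}}\le \norm{{\bf g}}_{C(\T,\B(H))}$, a localization argument for the reverse inequality, the identity $\sigma_n(\A_{\bf f})=\A_{\sigma_n({\bf f})}$, and Fej\'er's theorem in $C(\T,\B(H))$) is the standard one, and every step is sound. The only place you leave details unwritten is the lower bound, and there the simplest choice already works: take $\x_N=(N^{-1/2}e^{ijt_0}x)_{j=1}^N$ and $\y_N=(N^{-1/2}e^{ikt_0}y)_{k=1}^N$ with $\|x\|=\|y\|=1$, so that $\|\x_N\|_{\l2}=\|\y_N\|_{\l2}=1$ and $\ll \A_{\bf g}\x_N,\y_N\gg=\sum_{|l|<N}\bigl(1-\tfrac{|l|}{N}\bigr)e^{ilt_0}\la \hat{\bf g}(l)x,y\ra$, which is exactly the Fej\'er mean at $t_0$ of the continuous scalar function $t\mapsto\la {\bf g}(t)x,y\ra$ and hence converges to $\la {\bf g}(t_0)x,y\ra$; taking suprema over $t_0$ and over unit vectors $x,y$ gives $\normbh{\A_{\bf g}}\ge\norm{{\bf g}}_{C(\T,\B(H))}$, which is what your sketch needs.
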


\begin{defi}  Given $P\in P(\T, \mathcal B(H))$, say $P(t)= \sum_{l} T_l e^{ilt}$ for some $(T_l)_{l\in \Z}\in c_{00}(\mathcal B(H))$, we denote
$$\|P\|_{L^1_{SOT}}= \sup_{\|x\|=1} \bigintsss_0^{2\pi} \Norm{\sum_{l} T_l(x) e^{ilt}}\frac{dt}{2\pi}.$$
\end{defi}

Next result can be achieved from the inclusion $\Ml2\cap \mathcal T(H)\subset M_{SOT}(\T,\B(H))$, but we give an independent proof based upon Lemma \ref{cinfty}.

\begin{prop}
\label{prp}
Let $(T_l)_{l\in \Z}\in c_{00}(\B(H))$ and $P(t)= \sum_{l} T_l e^{ilt}$.  Then,
$\A_P\in \mathcal{P}(\ell^2(H))$ and
$\norm{P}_{L^1_{SOT}}\le \norm{\A_P}_{\Ml2}.$
\end{prop}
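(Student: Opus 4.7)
That $\A_P\in\Pl2$ is immediate: since $(T_l)_{l\in\Z}$ has finite support, $\A_P=\sum_l\Dl$ is a finite sum of diagonals and condition (\ref{hip0}) is trivially satisfied.

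For the norm estimate, my plan is to use duality of $L^1(\T,H)$ with $L^\infty(\T,H)$. Fix a unit $x\in H$. The function $t\mapsto P(t)x=\sum_l T_l(x)e^{ilt}$ is a trigonometric polynomial in $L^1(\T,H)$, and by a F\'ejer-smoothing argument its $L^1(\T,H)$-norm can be written as the supremum of $\bigl|\int_0^{2\pi}\la P(t)x,h(t)\ra\dt\bigr|$ over trigonometric polynomials $h(t)=\sum_l y_l e^{ilt}$ with $\sup_t\|h(t)\|_H\le 1$. Orthogonality of the exponentials collapses this pairing to $\sum_l\la T_l(x),y_l\ra$, so it suffices to prove $|\sum_l\la T_l(x),y_l\ra|\le\|\A_P\|_{\Ml2}$ for every such $x$ and $h$.

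Given $x$ and $h$, I would introduce the rank-one-operator-valued trigonometric polynomial $G(t)=h(-t)\otimes x$, so that $\hat G(l)=y_l\otimes x$ and $\|G\|_{C(\T,\B(H))}=\sup_t\|h(-t)\|\le 1$. Lemma \ref{cinfty} then yields $\bB:=\A_G=(y_{j-k}\otimes x)_{k,j}\in\Bl2$ with $\|\bB\|_{\Bl2}\le 1$. Using the identity $(a\otimes b)\circ T=T^*(a)\otimes b$, a short calculation gives
$$(\bB\ast\A_P)_{k,j}=T^*_{j-k}(y_{j-k})\otimes x.$$
To extract the scalar $\sum_l\la T_l(x),y_l\ra$ out of this matrix with rank-one entries, I would test against the unit vectors $\x_M=\frac{1}{\sqrt{M}}\sum_{j=1}^M x\ej$ and $\y_M=\frac{1}{\sqrt{M}}\sum_{k=1}^M x\ek$ in $\l2$ and count pairs $(k,j)$ in $[1,M]^2$ with $j-k=l$. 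This produces the F\'ejer-like average
$$\ll(\bB\ast\A_P)\x_M,\y_M\gg=\sum_{|l|<M}\frac{M-|l|}{M}\la T_l(x),y_l\ra,$$
which converges to $\sum_l\la T_l(x),y_l\ra$ as $M\to\infty$ because only finitely many $T_l$ are nonzero. Combining this with the chain of inequalities
$$\bigl|\ll(\bB\ast\A_P)\x_M,\y_M\gg\bigr|\le\|\bB\ast\A_P\|_{\Bl2}\le\|\bB\|_{\Bl2}\|\A_P\|_{\Ml2}\le\|\A_P\|_{\Ml2},$$
passing to the limit and then taking the supremum over admissible $x$ and $h$ yields the desired estimate.

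The main obstacle is engineering a test matrix $\bB$ and test vectors that together isolate the scalar pairing $\sum_l\la T_l(x),y_l\ra$ out of the matrix evaluation: the rank-one tensor $h(-t)\otimes x$ embeds the $H$-valued polynomial $h$ into $\B(H)$ in a way compatible with Lemma \ref{cinfty}, and the F\'ejer-type averaging in $M$ (as opposed to a single concentrated test vector) is what recovers the entire sum over $l$ rather than just one diagonal's contribution.
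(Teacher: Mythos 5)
Your argument is correct, and up to the last step it coincides with the paper's own proof: both reduce $\norm{P}_{L^1_{SOT}}$ via the duality $L^1(\T,H)\subset (C(\T,H))^*$ to bounding $|\sum_l\la T_l(x),y_l\ra|$ for unit $x$ and a $C(\T,H)$-normalized trigonometric polynomial $h=\sum_l y_l\varphi_l$, and both test $\A_P$ against the Toeplitz matrix $\bB$ with rank-one entries $y_{j-k}\otimes x$, whose $\Bl2$-norm is controlled by Lemma \ref{cinfty}. (Minor slip: with $G(t)=h(-t)\otimes x$ one gets $\hat G(l)=y_{-l}\otimes x$, not $y_l\otimes x$; take $G(t)=h(t)\otimes x$ instead, or relabel --- harmless, since $h(-\cdot)$ runs over the same admissible set.) Where you genuinely diverge is in extracting the scalar from $\bB*\A_P$. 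The paper applies Lemma \ref{cinfty} a second time: $\bB*\A_P$ is again Toeplitz, so its $\Bl2$-norm equals $\sup_t\|\sum_l S_lT_le^{ilt}\|_{\B(H)}$, which is then evaluated at $t=0$ on the vector $x$. You instead pair $\bB*\A_P$ against the normalized test vectors $\x_M,\y_M$ and recover $\sum_l\la T_l(x),y_l\ra$ as the limit of the Fej\'er-weighted sums $\sum_{|l|<M}\frac{M-|l|}{M}\la T_l(x),y_l\ra$, which stabilize because $(T_l)$ has finite support. Your version is slightly more elementary at that point, using only the definition of the $\Bl2$-norm rather than the exact identification of $\Cl2\cap\mathcal T(H)$ with $C(\T,\B(H))$ for the product matrix, at the cost of the limiting argument in $M$; the paper's version is shorter once Lemma \ref{cinfty} is in hand. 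Both are sound, and the claim $\A_P\in\Pl2$ is indeed immediate in either treatment.
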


\begin{proof}
 For each ${\bf B}\in \Pl2 \cap\mathcal T(H)$  given by $Q(t)=\sum_l S_l e^{ilt}\in P (\T,\B(H))$, applying Lemma \ref{cinfty} to ${\bf B}*\A_P$ and ${\bf B}$, we can write
\be \label{hipotesis}\sup_{t\in [-\pi,\pi]}\left\|\sum_l S_l T_l e^{ilt}\right\|_{\B(H)}\le \|\A_P\|_{\mathcal{M}_r(\ell^2(H))} \sup_{t\in [-\pi,\pi]}\left\|\sum_l S_l e^{ilt}\right\|_{\B(H)}.\ee
Recall that $L^1(\T, H)\subset (C(\T, H))^*$, that is for $g=\sum_l \hat g(l) \varphi_l\in L^1(\T, H)$ then $\Phi_g(\sum_l x_l\varphi_l)=\sum_l \la x_l, \hat g(l)\ra$ defines a functional in $(C(\T, H))^*$ with $\|g\|_{L^1(\T, H)}= \|\Phi_g\|_{(C(\T, H))^*}$.
Therefore
$$\norm{P}_{L^1_{SOT}}= \sup\{ |\sum_l \la x_l, T_l(x)\ra|: \|x\|=1, \|\sum_l x_l\varphi_l\|_{C(\T, H)}=1\}.$$
Let $x\in H$ with $\|x\|=1$ and $(x_l)\subset H$ such that $\|\sum_l x_l\varphi_l\|_{C(\T, H)}=1$. Define $S_l=x_l\otimes x$ for $l\in \Z$. One has that $\la x_l, T_l(x)\ra x=S_l(T_l x)$ and $\sum_l S_l e^{ilt}\in C(\T,\B(H))$ with
$$\sup_{t\in [-\pi,\pi]}\left\|\sum_l S_l e^{ilt}\right\|_{\B(H)}= \sup_{t \in [-\pi,\pi], \|z\|=1} \left\|\sum_l \la x_l e^{ilt}, z\ra x_0\right\|_{H}=\sup_{t\in [-\pi,\pi]}\left\|\sum_l x_l e^{ilt}\right\|_{H}.$$
From (\ref{hipotesis})   we obtain
\ba
\left|\sum_{l} \la x_l, T_l(x)\ra\right|&\le & \sum_{l}  \|S_lT_l(x)\|_{H}\\
&\le& \left\|\sum_l S_lT_l\right\|_{\B(H)}\\
&\le&\sup_{t\in [-\pi,\pi]}\left\|\sum_l S_lT_le^{ilt}\right\|_{\B(H)}\\
&\le&\|\A_P\|_{{\mathcal M}_r(\l2)} \sup_{t\in [-\pi,\pi]}\left\|\sum_l S_l e^{ilt}\right\|_{\B(H)}\\
&=&\|\A_P\|_{{\mathcal M}_r(\l2)} \sup_{t\in [-\pi,\pi]}\left\|\sum_l x_l e^{ilt}\right\|.
\ea
The result is now complete.
\end{proof}

We write $\tilde L^1_{SOT}(\T,\mathcal B(H))$ for the closure of polynomials under the $\|\cdot\|_{L^1_{SOT}}$.
Combining Proposition \ref{lema:norma} and Proposition \ref{prp} we obtain the following result.
\begin{coro} $L^1(\T, \B(H))\subset \Lrl2 \subset \tilde L^1_{SOT}(\T, \B(H))$.
\end{coro}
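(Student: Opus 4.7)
The plan is to interpret both inclusions via the Toeplitz identification ${\bf f}\leftrightarrow \A_{\bf f}$ and to deduce them by combining the two norm estimates from Propositions~\ref{lema:norma} and~\ref{prp} with density of trigonometric polynomials. So both inclusions are really statements about a bounded map between a function space and the Toeplitz part of a matrix space, extended by continuity from polynomials.

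For the first inclusion $L^1(\T,\B(H))\subset\Lrl2$, I would take ${\bf f}\in L^1(\T,\B(H))$ and approximate it in $L^1$-norm by trigonometric polynomials $P_n\in P(\T,\B(H))$ (a standard density statement in $L^1(\T,X)$ for any Banach space $X$). Each $\A_{P_n}$ lies in $\Pl2$, and Proposition~\ref{lema:norma} applied to the difference ${\bf f}-P_n$ gives
$$\|\A_{\bf f}-\A_{P_n}\|_{\Ml2}\le \|{\bf f}-P_n\|_{L^1(\T,\B(H))}\xrightarrow[n\to\infty]{}0.$$
Hence $\A_{\bf f}$ is in the $\Ml2$-closure of $\Pl2$, which is by definition $\Lrl2$ (and in fact also $\Lll2$).

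For the second inclusion $\Lrl2\subset\tilde L^1_{SOT}(\T,\B(H))$, understood within the Toeplitz subspace, I would take a Toeplitz matrix $\A=(T_{j-k})\in\Lrl2$. By Theorem~\ref{thm:caract_L1} the Fej\'er averages satisfy $\sigma_n(\A)\to \A$ in $\Ml2$, and since $\A$ is Toeplitz, so is $\sigma_n(\A)=\A_{Q_n}$ with
$$Q_n(t)=\sum_{|l|\le n}\Bigl(1-\tfrac{|l|}{n+1}\Bigr)T_l\,e^{ilt}\in P(\T,\B(H)).$$
Proposition~\ref{prp} applied to the polynomial $Q_n-Q_m$ then yields
$$\|Q_n-Q_m\|_{L^1_{SOT}}\le\|\A_{Q_n}-\A_{Q_m}\|_{\Ml2}=\|\sigma_n(\A)-\sigma_m(\A)\|_{\Ml2}\xrightarrow[n,m\to\infty]{}0,$$
so $(Q_n)$ is Cauchy in $\|\cdot\|_{L^1_{SOT}}$ and its limit provides the required element of $\tilde L^1_{SOT}(\T,\B(H))$ associated with $\A$.

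The main subtlety I expect is checking that the second inclusion is well defined, i.e.\ that the $\tilde L^1_{SOT}$-limit depends only on $\A$ and not on the choice of approximating polynomials (or summability kernel). This should follow from the elementary bound $\|P\|_{L^1_{SOT}}\ge \sup_{\|x\|=1}\|T_l(x)\|$ for each fixed $l$, which can be read off from the definition of $\|\cdot\|_{L^1_{SOT}}$ by integrating against $e^{-ilt}$; it forces the Fourier coefficients of any $\|\cdot\|_{L^1_{SOT}}$-Cauchy sequence to converge in the strong operator topology, thereby pinning down the limit by the coefficients $T_l$ of $\A$ itself.
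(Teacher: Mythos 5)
Your argument is correct and follows exactly the route the paper intends: the paper's proof is the single line ``combine Propositions \ref{lema:norma} and \ref{prp},'' and your write-up is precisely the natural elaboration of that (density of polynomials in $L^1(\T,\B(H))$ plus the estimate $\|\A_{\bf f}\|_{\mathcal M(\l2)}\le\|{\bf f}\|_{L^1}$ for the first inclusion; Fej\'er means and the estimate $\|P\|_{L^1_{SOT}}\le\|\A_P\|_{\Ml2}$ to get a Cauchy sequence in $\tilde L^1_{SOT}$ for the second). Your remark on well-definedness via $\|T_l\|_{\B(H)}\le\|P\|_{L^1_{SOT}}$ is a worthwhile detail the paper leaves implicit (and in fact gives norm convergence of the coefficients, not merely SOT convergence).
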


\subsection{The upper triangular case}

As usual, if $X$ is a complex Banach space we write $\mathcal H(\D,X)$ for the space of $X$-valued holomorphic functions, $H^\infty(\D, X)$ for the Banach space of bounded analytic functions on the unit disc with values in $X$ and $A(\D,X)$ stands for the disc algebra that is the closure of analytic polynomials in $H^\infty(\D, X)$, with the norm
$$\norm{F}_{H^\infty(\D,X)}=\sup\lbrace{\|F(z)\|\;\;|\; z\in \D\rbrace}.$$
Also we denote by $H^1(\D, X)$ the space of functions $F\in \mathcal H(\D,X)$ such that $$\|F\|_{H^1(\D, X)}=\sup_{0<r<1} \int_0^{2\pi} \|F(re^{it})\|\frac{dt}{2\pi}<\infty.$$
We write $H^1(\T, X)$ for the closure of analytic polynomials under this norm, which turns out to coincide with functions in $f\in L^1(\T, X)$ such that $\hat f(l)=0$ for $l<0.$
It is well known that given $F\in H^1(\D, X)$ and $F_r(z)=F(rz)$ then $F_r\in H^1(\T,X)$. Moreover one has that $F_r\to F$ in $H^1(\D, X)$ if and only if $F\in H^1(\T, X)$. In general $H^1(\T,X)$ does not coincide with $H^1(\D,X)$. The property for that to hold is the so called Analytic Radon-Nikodym property, in short $ARNP$, introduced in \cite{BD}. It is easy to see that $c_0\subset \B(H)$ and then $\B(H)$ fails to have the $ARNP$. In particular $H^1(\T, \B(H))\subsetneq H^1(\D, \B(H))$. Since we only need the basic theory, which extends to the vector-valued setting from the scalar-valued one, we refer to the books  \cite{D,G} for possible results to be used.

\begin{defi} Let $\A=(\Tkj)\in \mathcal U(H)$. Define
$$F_\A(z)=  \sum_{l=0}^\infty \Dl z^l=(z^{(j-k)}\Tkj), \quad |z|<1,$$
\end{defi}

It follows from the definitions that
\be \label{anal} F_\A(re^{it})={\bf M}_{P_r}* {\bf M}_t* \A={\bf M}_{P_r}\ast f_\A(t)=\sum_{l=0}^{\infty}{\bf D}_l r^le^{ilt}.\ee

\begin{nota} If $\A=(T_{k,j})_{k,j}\in \mathcal U(H)$ satisfies the condition (\ref{hip0})
we can guarantee that
 $F_\A(z)=\sum_{l=0}^\infty \Dl z^l$
 is a well defined holomorphic function in ${\mathcal H}(\D, \Bl2).$
\end{nota}

\begin{prop}
Let $\A=(\Tkj)\in \mathcal U(H)$ satisfying (\ref{hip0}).

(i)
 $\A\in \Ml2$ if and only if $F_\A\in H^\infty(\D, \Ml2)$. Moreover $$\|\A\|_{\Ml2}= \|F_\A\|_{H^\infty(\D, \Ml2)}.$$

 (ii) $\A\in (\Bl2, \Cl2)_r$ if and only if ${\bf B}*F_\A\in A(\D, \Bl2)$ for all ${\bf B}\in \Bl2$. Moreover $$\|\A\|_{(\Bl2, \Cl2)_r}= \sup\{\|{\bf B}*F_\A\|_{H^\infty(\D, \Bl2)}:\|{\bf B}\|_{\Bl2}=1\}.$$

 (iii)
 $\A\in \Lrl2$ if and only if $F_\A\in A(\D, \Ml2)$. Moreover $$\|\A\|_{\Ml2}= \|F_\A\|_{A(\D, \Ml2)}.$$

\end{prop}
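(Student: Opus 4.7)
The key observation, for all three parts, is that upper triangularity of $\A$ forces ${\bf D}_l=0$ for $l<0$, so the Poisson mean $P_r(\A)$ coincides with $F_\A(r)$, and more generally
\[ F_\A(re^{it}) = {\bf M}_{P_r} * f_\A(t), \qquad 0<r<1,\; t\in[0,2\pi), \]
as already recorded in (\ref{anal}). Together with $\|{\bf M}_{P_r}\|_{\Ml2}=\|P_r\|_{L^1(\T)}=1$ (via (\ref{f1}) and the scalar Bennett identification), the equality $\|f_\A(t)\|_{\Ml2}=\|\A\|_{\Ml2}$ from (\ref{equi}), and the Banach-algebra structure of $\Ml2$ under Schur product, this formula links the analytic side and the multiplier side in each of the three cases.

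For (i): the inequality $\|F_\A(re^{it})\|_{\Ml2}\le\|{\bf M}_{P_r}\|_{\Ml2}\|f_\A(t)\|_{\Ml2}=\|\A\|_{\Ml2}$ is immediate, giving $\|F_\A\|_{H^\infty(\D,\Ml2)}\le\|\A\|_{\Ml2}$ when $\A\in\Ml2$. For the converse, any $\bB\in\Bl2$ satisfies $\bB*F_\A(re^{it})=P_r(\bB*\A)$ evaluated at $t$, hence $\sup_r\|P_r(\bB*\A)\|_{\Bl2}\le\|F_\A\|_{H^\infty(\D,\Ml2)}\|\bB\|_{\Bl2}$; applying the Poisson-kernel instance of (\ref{equi00}) then places $\bB*\A\in\Bl2$ with the desired bound, whence $\A\in\Ml2$ and $\|\A\|_{\Ml2}\le\|F_\A\|_{H^\infty(\D,\Ml2)}$.

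For (ii): since $(\Bl2,\Cl2)_r\subset\Ml2$, part (i) already makes $\bB*F_\A$ a bounded holomorphic $\Bl2$-valued function for every $\bB\in\Bl2$, and upper triangularity of $\bB*\A$ identifies $\bB*F_\A$ as its analytic extension. By \cite[Proposition~4.4]{BB2}, $\bB*\A\in\Cl2$ iff $f_{\bB*\A}:\T\to\Bl2$ is continuous. Continuity of $f_{\bB*\A}$ makes its Poisson extension an element of $A(\D,\Bl2)$ that must coincide with $\bB*F_\A$, while the converse is immediate by matching Fourier coefficients on the boundary. The norm identity then comes from the maximum modulus principle and the fact that $\|f_{\bB*\A}(t)\|_{\Bl2}$ is independent of $t$ (since ${\bf M}_t$ has unit norm in $\mathcal{M}(\l2)$).

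For (iii): by the right-multiplier analogue of Theorem~\ref{thm:caract_cont} (obtained by taking adjoints), $\A\in\Lrl2$ iff $f_\A:\T\to\Ml2$ is continuous. Poisson-extending this continuous function produces an element of $A(\D,\Ml2)$ agreeing with $F_\A$ by Fourier coefficient matching, and conversely $F_\A\in A(\D,\Ml2)$ forces its continuous boundary function to equal $f_\A$. The norm identity follows from (\ref{equi}) and the maximum modulus principle. The main technical obstacle I expect is invoking a norm-form of (\ref{equi00}) to close the converse inequality in (i), and, throughout (ii)-(iii), relying on the uniform convergence of the Abel means of a $C(\T,X)$-valued function, so that boundary identifications are sharp; both are standard but need to be explicitly referenced.
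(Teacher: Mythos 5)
Your proof is correct and follows essentially the same route as the paper: the identity $F_\A(re^{it})={\bf M}_{P_r}*f_\A(t)$ together with the Poisson-kernel instances of (\ref{equi00})--(\ref{equi0}) handles (i), the characterization of $\Cl2$ via continuity of $f_{\bB*\A}$ handles (ii), and Theorem \ref{thm:caract_cont} (in its right-multiplier form, as you correctly note) combined with (i) handles (iii). The two points you flag as needing explicit reference --- the norm form of (\ref{equi00}) and the uniform convergence of Abel means of $C(\T,X)$-valued functions --- are precisely the ingredients the paper itself relies on (implicitly in the first case, via the citation of \cite[Theorem 4.12]{BB2} in the second), so there is no gap relative to the paper's own level of detail.
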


\begin{proof} (i) From (\ref{anal}) one gets $\|F_\A\|_{H^\infty(\D, \Ml2)}\le \|\A\|_{\Ml2}$. Conversely, use (\ref{equi0}) for $k_n= P_{r_n}$ for a sequence $r_n$ converging to $1$ to obtain $\|F_\A\|_{H^\infty(\D, \Ml2)}=\sup_n\|P_{r_n}(\A)\|_{\Ml2}= \|\A\|_{\Ml2}$.

(ii) It follows from \cite[Theorem 4.12]{BB2} that $F_{{\bf B}*\A}\in A(\D, \Bl2)$ for all ${\bf B}\in \Bl2$ due to the condition ${\bf B}*\A\in \Cl2 \cap {\mathcal U}(H)$.

(iii)
 Using Theorem \ref{thm:caract_cont} we know that $\A\in \Lrl2)$ if and only if $f_\A\in C(\T, \Ml2)$.
Since $F_\A(re^{it})=P_r(f_\A(t))$, invoking part (i) we have that $$\normmh{P_r(\A)-\A}=\|P_r*f_\A- f_\A\|_{C(\T,\Ml2)}$$
which gives the result.
\end{proof}

\begin{defi}
Let $\A=(T_{j-k})\in \mathcal U(H)\cap \mathcal T(H)$, we write $$G_\A(z)=  \sum_{l=0}^\infty T_l z^l, \quad |z|<1.$$
\end{defi}

The assumption  $\sup_{l\ge 0} \|T_l\|<\infty$ gives that $G_\A(z)=\sum_{l=0}^\infty T_l z^l\in {\mathcal H}(\D,\B(H))$. In particular,  for each $0<r<1$
$$(G_\A)_r(e^{it})=G_\A(re^{it})=\sum_{l=0}^\infty T_l r^le^{ilt}\in C(\T, \B(H)).$$

\begin{teor} Let $\A= (T_{j-k})\in {\mathcal U(H)}\cap {\mathcal T(H)} $ with $\sup_{l\ge 0} \|T_l\|<\infty$.

(i) $\A\in \Bl2$ if and only if $G_\A\in H^\infty(\D,\B(H))$.

(ii) $\A\in \Cl2$ if and only if $G_\A\in A(\D,\B(H))$.

(iii) If $G_\A\in H^1(\D,\B(H))$ then $\A\in \Ml2$.

(iv) If $G_\A\in H^1(\T,\B(H))$ then $\A\in \Lrl2$.
\end{teor}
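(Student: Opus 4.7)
The plan is to identify each condition on $\A$ with a condition on the analytic symbol $G_\A$, using the two Toeplitz bridges already in hand: Lemma \ref{cinfty}, linking the $\Bl2$--norm of a Toeplitz matrix with the $C(\T,\B(H))$--norm of its symbol, and Proposition \ref{lema:norma}, controlling the $\Ml2$--norm of a Toeplitz matrix by the $L^1(\T,\B(H))$--norm of its symbol. Both will be combined with the summability--kernel characterisations (\ref{equi00}), (\ref{equi0}) and Theorem \ref{thm:caract_L1}. The pivotal observation is that, for upper triangular Toeplitz $\A$ satisfying (\ref{hip0}), the Poisson average $P_r(\A)$ has entries $r^{j-k}T_{j-k}$ and so is precisely the Toeplitz matrix $\A_{G_{\A,r}}$ attached to the continuous symbol $G_{\A,r}(e^{it}):=G_\A(re^{it})\in C(\T,\B(H))$ for every $0<r<1$.

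For (i), Lemma \ref{cinfty} applied to $P_r(\A)=\A_{G_{\A,r}}$ yields $\|P_r(\A)\|_{\Bl2}=\sup_{|z|=r}\|G_\A(z)\|$; taking the supremum in $r<1$ and invoking (\ref{equi00}) with the Poisson kernel gives the equivalence $\A\in\Bl2\Leftrightarrow G_\A\in H^\infty(\D,\B(H))$, with matching norms. For (ii), the Fej\'er means satisfy $\sigma_n(\A)=\A_{\sigma_n(G_\A)}$ where $\sigma_n(G_\A)$ is a $\B(H)$--valued analytic polynomial, so Lemma \ref{cinfty} transports the defining condition $\sigma_n(\A)\to\A$ in $\Bl2$ into $\sigma_n(G_\A)\to G_\A$ in $C(\T,\B(H))$; by Fej\'er's theorem in both directions this is exactly the statement that $G_\A$ extends continuously to $\bar\D$, i.e.\ $G_\A\in A(\D,\B(H))$, with the limit identified as $G_\A$ by matching Fourier coefficients.

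Parts (iii) and (iv) follow the same scheme with Proposition \ref{lema:norma} in place of Lemma \ref{cinfty}. Since $G_{\A,r}\in C(\T,\B(H))\subset L^1(\T,\B(H))$ for every $r<1$, the proposition gives $\|P_r(\A)\|_{\Ml2}\le\|G_{\A,r}\|_{L^1(\T,\B(H))}\le\|G_\A\|_{H^1(\D,\B(H))}$, and (\ref{equi0}) with the Poisson kernel delivers (iii). For (iv), the stronger hypothesis $G_\A\in H^1(\T,\B(H))$ gives, by the very definition of $H^1(\T,X)$, that $\|G_{\A,r}-G_\A\|_{L^1(\T,\B(H))}\to 0$ as $r\to 1$; writing $P_r(\A)-\A=\A_{G_{\A,r}-G_\A}$ (using $\A=\A_{G_\A}$ via matching Fourier coefficients, which is legitimate since here $G_\A\in L^1(\T,\B(H))$) and applying Proposition \ref{lema:norma} shows $P_r(\A)\to\A$ in $\Ml2$, which by Theorem \ref{thm:caract_L1} means $\A\in\Lrl2$.

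The only delicate point is the bookkeeping identification $\A=\A_{G_\A}$, available as soon as $G_\A$ has boundary values in $L^1(\T,\B(H))$; this is exactly where the distinction between $H^1(\D,\B(H))$ and $H^1(\T,\B(H))$ — the failure of $ARNP$ for $\B(H)$ — bites, and is what forces the weaker conclusion in (iii) (we only have control of the approximants $P_r(\A)$, not of a putative $L^1$--symbol for $\A$ itself).
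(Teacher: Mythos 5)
Your proof is correct and follows essentially the same route as the paper: the identification $P_r(\A)=\A_{(G_\A)_r}$ combined with Lemma \ref{cinfty} and (\ref{equi00}) for (i)--(ii), and with Proposition \ref{lema:norma}, (\ref{equi0}) and Theorem \ref{thm:caract_L1} for (iii)--(iv). The only cosmetic difference is that in (ii) you run the argument through the Fej\'er means $\sigma_n$ instead of the Poisson means $P_r$, which is an equivalent formulation of membership in $\Cl2$ and in $A(\D,\B(H))$.
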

\begin{proof} (i) and (ii) is the content of  \cite[Corollary 4.13]{BB2}. We include the proof for completeness.
Since  $ {\widehat{( G_\A)}}_r(j-k)= T_{j-k} r^{j-k}$, the Toeplitz matrix associated to $(G_\A)_r$ turns out to be $\A_{(G_\A)_r}=P_r(\A)$.
Now,  Lemma \ref{cinfty} implies that $\|P_r(\A)\|_{\Bl2}=\|(G_\A)_r\|_{C(\T,\B(H))}$, which together with (\ref{equi00}) gives that $$\|\A\|_{\Bl2}= \sup_{0<r<1} \|P_r(\A)\|_{\Bl2}=\|G_\A\|_{H^\infty(\D, \B(H))}.$$

Since  $G_\A\in A(\D,\B(H))$ iff $(G_\A)_r\to G_\A$ in $H^\infty(\D, \B(H))$, we have that $G_\A\in A(\D,\B(H))$ if and only if $\A\in \Cl2$.

(iii) Using Theorem \ref{lema:norma} one has that $$\|P_r(\A)\|_{\Ml2}\le \|(G_\A)_r\|_{L^1(\T,\B(H))}$$
and due to (\ref{equi0}) we conclude that $\|\A\|_{\Ml2}\le \|G_\A\|_{H^1(\D,\B(H))}$.

(iv) follows from (iii) since $$\|P_r(\A)-\A\|_{\Ml2}\le \|(G_\A)_r-G_\A\|_{H^1(\D,\B(H))}$$ and taking limits as $r\to 1$.

\end{proof}

\vspace{.1in}
\[
\begin{tabular}{lccl}
Departamento de An\'alisis Matem\'{a}tico &   \\
Universidad de Valencia &  \\
46100 Burjassot &   \\
Valencia &   \\
Spain &  &  & \\
oscar.blasco@uv.es & Ismael.Garcia-Bayona@uv.es \\
\end{tabular}
\]


\begin{thebibliography}{CoifRoch}

\bibitem{AP} Alexandrov, A. B. ; Peller, V.V. \textit{Hankel and Toeplitz-Schur multipliers}, Math.Ann., 324 (2002), 277--327.


\bibitem{Be} Bennet, G. \textit{Schur multipliers} Duke Math. J. 44 (1977), 603–-639.

\bibitem{BB} Blasco, O.; Garc\'{\i}a-Bayona, I. \textit{Schur product with operator-valued entries}, arXiv:1804.03432 [math.FA].

\bibitem{BB2} Blasco, O.; Garc\'{\i}a-Bayona, I. \textit{New spaces of matrices with operator-valued entries}, Quaestiones Math. (to appear)

\bibitem{B} Blasco, O. \textit{Fourier Analysis on vector measures on locally compact abelian groups}
Revista de
la Real Academia de Ciencias Exactas, F{\'i}sicas y Naturales. Serie A. Matem{\'a}ticas,
2016, 110, 2, 519--539.

\bibitem{BG} B\"ottcher, A.; Grudsky, S. \textit{Toeplitz Matrices, Asymptotic Linear
Algebra, and Functional Analysis.} Hindustan Book Agency, New
Delhi, 2000 and Birkh\"auser Verlag, Basel,Boston, Berlin, 2000.

\bibitem{BD}
Bukhvalov, A.V.; Danilevich, A.A.
\textit{Boundary properties of analytic and harmonic functions
with values in Banach spaces}, Mat. Zametki \textbf{31}\,(1982), 203--214, English translation:
Math. Notes \textbf{31}\,(1982), 104--110.

\bibitem{DFS} Diestel, J.; Fourie, J.; Swart, J. \textit{The metric theory of tensor products. Grothendieck's R\'esum\'e Revisited} , American Math. Soc. 2008.

\bibitem{DU}  Diestel, J.;  Uhl, J.J. \textit{Vector Measures}, Math. Surveys vol 15 Amer. Math. Soc. Providence (1977)


\bibitem{D} Duren, P.L.
\textit{Theory of $H^p$ spaces},
Academic Press, New York and London, 1970.




%\bibitem{DJT}  Diestel,J.;  Jarchow, H., Tonge, A. \textit{Absolutely
%summing operators}, Cambridge University Press, (1995).

%\bibitem{D}  Dinculeanu, N. \textit{Vector Measures}, VEB Deutscher Verlag der Wissenschaften. Berlin (1966)

%\bibitem{H} Hengsen, W. \textit{A simpler proof of Singer's representation theorem,} Proc. Amer. Math. Soc. 124 (1996), 3211-3212.
\bibitem{G}
 Garnett, J.B.
\textit{Bounded analytic functions},
Academic Press, New York and London, 1981.

    \bibitem{HNVW} Hytonen, T.;
 van Neerven, J.;
Veraar, M.;
Weis, L.  \textit{
Analysis in Banach spaces. Vol. I. Martingales and Littlewood-Paley theory.}
Series of Modern Surveys in Mathematics, 63.
Springer, Cham, 2016. xvi+614 pp.

%\bibitem{Anca}Marcoci, A.N., Marcoci, L.G., \textit{A new class of linear operators on $\ell^2$ and Schur multipliers for them}, J. Funct. Spaces Appl. 5 (2007), no.2, 151-165.



\bibitem{PP}Persson, L-E.; Popa, N.
 \textit{Matrix Spaces and Schur Multipliers: Matriceal Harmonic Analysis}, NJ : World Scientific, 2014. 208 s.

%\bibitem{Ryan} Ryan, R.A, \textit{Introduction to tensor products of Banach Spaces}, Springer Monographs in Mathematics. Springer-Verlag London Ltd., London, 2002.







\bibitem{Schur} Schur, J. \textit{Bemerkungen zur Theorie der beschr\"ankten Bilinearformen mit unendlich vielen
Verandlichen.} J. Reine Angew. Math. 140 (1911), 1–28.

%\bibitem{S} Singer, I. \textit{Linear functionals on the space of continuous mappings of a compact Hausdorf space into a Banach space} (in Russian), Rev. Roum. Math. Pures Appl. 2 (1957), 301--315.MR 20:3445


%\bibitem{S2} Singer, I. \textit{Sur les applications lin\'eaires int\'egrales des espaces de fonctions continues. I,} Rev. Roum. Math. Pures Appl. 4 (1959), 391--401. MR 22:5883

\bibitem{T} Toeplitz, O. \textit{Zur Theorie der quadratischen und bilinearen Formen
von unendlichvielen Veranderlichen.} Math. Annalen 70
(1911), 351-376.

\end{thebibliography}
\end{document}